\newtheorem{theorem}{Theorem}
\newtheorem{lemma}[theorem]{Lemma}
\numberwithin{equation}{section}
\newcommand{\unit}{{\mathbf1}{\,}}
\newcommand{\spaceR}{{\mathbb R}}
\newcommand{\spaceD}{{\mathbf D}}
\newcommand{\spaceL}{{\mathbf L}}
\newcommand{\spaceC}{{\mathbf C}}
\newcommand{\spaceAC}{{\mathbf{AC}}}
\newcommand{\spaceLip}{{\spaceL_{\infty}^{p\nu}}}
\newcommand{\spaceLi}{{\spaceL_{\infty}}}
\renewcommand{\le}{\leqslant}
\renewcommand{\ge}{\geqslant}
\newcommand{\dd}{\displaystyle}
\newcommand{\norm}[1]{\|\,#1\|}
\newcommand{\w}{\widetilde}
\newcommand{\cal}{\mathcal}
\newcommand{\calT}{{\cal T}}
\DeclareMathOperator*{\vraisup}{vrai\, sup}
\begin{document}
\title[On the solvability of singular equations]{On the solvability of linear singular functional differential equations}

\author{E.I. Bravyi}

\address{E.I. Bravyi \newline
Department of Mathematics \\
State National Research Po\-ly\-tech\-nic University of Perm\\ 
Komsomol'sky pr., 29, Perm, 614990, Russia}
\email{bravyi@perm.ru}

\subjclass[2000]{34K06, 34K10}
\keywords{Functional differential equations; 
linear  equation; Cauchy problem, unique solvability, singular equations}

\begin{abstract}
Necessary and sufficient conditions for the solvability of boundary value problems for a family of func\-ti\-o\-nal dif\-fe\-ren\-ti\-al equations with a non-in\-teg\-ra\-ble singularity are obtained.
\end{abstract}

\maketitle

\centerline{\it Dedicated to Lyna Rakhmatullina on the occasion of her birthday}

\vskip20pt

\section{Introduction}

For the last decade a lot of works were devoted to searching conditions for the solvability for different boundary value problems for linear functional differential equations. Unimprovable effective conditions for the solvability are obtained for many problems. Note the monographs
\cite{Mu2000, HLS2002-5, KigPuzha},
and the articles
\cite{HLP2002-2, 5, Hakl-Mukh-2005, Mu2006-1, Mukh2006,
Mukh2006-1, Mu2007, Hakl2009}
on the solvability of periodic problem,
\cite{Mu1997-2, Mu1997-1, MuSremr2005, MuSremr2006}
on the solvability of two-point problem for second order equations,
\cite{Hakl, BHL2} on the solvability of the Cauchy problem for scalar
equations, and \cite{Sremr2006, Sh2010} on the solvability of the Cauchy problems for systems of equations.

Here we consider
boundary value problems for singular linear functional differential equations in the Volterra and non-Volterra cases.
For the non-Volterra case we obtain unimprovable in a sense conditions for the unique solvability of some boundary value problems (Theorems \ref{ket-1}, \ref{ket-3}, \ref{ket-2}, \ref{ket-4}) in the terms of the norms of positive and negative parts of regular functional operators. These conditions are close to the solvability conditions from
\cite{HLS2002-5, HLP2002-2, 5, Hakl-Mukh-2005, Mu2006-1,Mukh2006,Mukh2006-1,Mu2007,Hakl2009,Mu1997-2,Mu1997-1,
MuSremr2005,MuSremr2006,Hakl, BHL2,Sremr2006,Sh2010}.
For the Volterra case, the similar results on the solvability of the Cauchy problem were obtained in \cite{Zernov-2001-53-4, Zernov-1} for some classes of nonlinear singular functional differential equations.

In \cite{R1, R2, R3} a new natural approach to singular functional differential equations is formulated. The basic of this approach is 1) a special choice of the solution space, which must coincide with the space of all solutions of some ``model'' boundary value problem for a simple ``model'' singular equation, 2) using the Fredholm property of boundary value problems for completely continuous perturbations of the ``model'' equation. Such approach was realized for some kinds of singularities in \cite{Lab, Shind, Bravyi, Kung}.

We use the following notation:
$\spaceR=(-\infty,\infty)$;

$\spaceL_q[a,b]$, $q\ge1$  is the Banach space of measurable functions $z:[a,b]\to\spaceR$ such that 
$$\norm{z}_{\spaceL_q[a,b]}=\left(\int_a^b|z(s)|^q\,ds\right)^{1/q}<+\infty\ \text{ for $q<+\infty$}, $$
$$\norm{z}_{\spaceL_\infty[a,b]}=\vraisup\limits_{s\in[a,b]}|z(s)|<+\infty.$$

$\spaceL_q=\spaceL_q[0,1]$, $\spaceL=\spaceL_1[0,1]$;

$\spaceC[a,b]$ is the Banach space of continuous functions $x:[a,b]\to\spaceR$ with the norm $$\norm{x}_{\spaceC[a,b]}=\max_{t\in[a,b]}|x(t)|;$$

$\spaceC=\spaceC[0,1]$;

$\spaceAC[a,b]$ is the Banach space of absolutely continuous functions $x:[a,b]\to\spaceR$ with the norm $$\norm{x}_{\spaceAC[a,b]}=|x(b)|+\int_a^b|\dot x(s)|\,ds;$$

$\spaceAC=\spaceAC[0,1]$.

Consider a ``model'' \cite[p.~91]{R3} singular equation:
\begin{equation}\label{ke-1}
    \dot x(t)=-\dfrac{k}{t}x(t)+f(t),\quad t\in(0,1],
\end{equation}
where $k\ne0$ is a given real constant, $f\in\spaceL$.

We extend to the case $q=1$ some results from  \cite{Plaksina2010, Plaksina2011, Plaksina2012}, where equation \eqref{ke-1} for $f\in\spaceL_q$, $q>1$ was used as a model one.  Moreover,
new unimprovable in a sense conditions for the solvability of the initial problem for singular functional differential equations will be obtained.

A locally absolutely continuous on $(0,1]$ function $x:(0,1]\to\spaceR$ is called a solution of
\eqref{ke-1} if it satisfies this equation almost everywhere on  $(0,1]$.
Since \eqref{ke-1} is not singular in each interval $[\varepsilon,1]$ ($\varepsilon\in(0,1)$), then any its solution has a representation
\begin{equation}\label{ke-2}
    x(t)=t^{-k}x(1)-\int_t^1\left(\frac{s}{t}\right)^kf(s)\,ds=
    t^{-k}\bigl(x(1)-\int_t^1s^kf(s)\,ds\bigr),\quad t\in(0,1].
\end{equation}
Denote by $\spaceD$ the set of all solutions of equation \eqref{ke-1} for all $f\in\spaceL$.

The Cauchy problem for singular equations and problems with weighted initial equations are considered, in particular, in \cite{Sokhadze-1,
Sokhadze-3, KigSoh-1997, KigSoh-1997-2,Sokhadze-4, Sokhadze-5, Sokhadze-6,
Agarwal, Ronto-10,Ronto-20,Ronto-30}.
In \cite{Sokhadze-1, Sokhadze-3,KigSoh-1997, KigSoh-1997-2,Sokhadze-4, Sokhadze-5, Sokhadze-6}  for nonlinear singular functional differential equations,
the conditions for the solvability of the Cauchy problem and problems with weighted initial conditions were obtained (including the many-dimensional case). In these works, the solvability conditions contain one-sided restriction on the right-hand members of singular equations.

We can interpret the works \cite{Ronto-10, Ronto-20, Ronto-30} as a research  of singular equations in the space of solutions of the ``model'' singular equation
\begin{gather}
    \dot x(t)=p(t)f(t),\quad t\in[0,1],\label{nnew-1}
\end{gather}
for all $f\in\spaceL$,
where $p:(0,1]\to(0,+\infty)$ is a non-increasing continuous function such that $\lim_{t\to0+}p(t)=+\infty$. Any function $x:(0,1]\to\spaceR$ such that $x\in\spaceAC[\varepsilon,1]$ for each $\varepsilon(0,1)$ is called a solution of
\eqref{nnew-1} if it satisfies the equation everywhere on $[0,1]$.
It is easy to show that
for every such solution $x$ there exists the finite limit
$\lim_{t\to0+}x(t)/p(t)$.

The results archived in this work is close to \cite{Sokhadze-1,
Sokhadze-3, KigSoh-1997, KigSoh-1997-2,Sokhadze-4, Sokhadze-5, Sokhadze-6,
Ronto-10,Ronto-20,Ronto-30}, however a special choice of ``model'' equations \eqref{ke-1} and \eqref{ke-55} allows to consider more general singularities.

Further in Section \ref{ss-2} we describe properties of the solution spaces $\spaceD_+$ and $\spaceD_-$ of ``model'' equation
\begin{equation*}
    \dot x(t)=-\frac{k}{t} x(t)+f(t),\quad t\in(0,1],
\end{equation*}
for $k>0$ ($\spaceD_+$) and $k<0$ ($\spaceD_-$). Section \ref{ss-3} deals with the Fredholm property of boundary value problems in the spaces $\spaceD_+$ and $\spaceD_-$, and Section \ref{ss-4}  with the conditions for solvability of the Cauchy problem (Theorems \ref{ket-1} and \ref{ket-3}).
In Section \ref{ss-5} more general ``model'' equation
\begin{equation*}
    \dot x(t)=-p(t) x(t)+f(t),\quad t\in(0,1],
\end{equation*}
is considered, where the function $p:(0,1]\to\spaceR$ is positive, its contraction on every interval $[\varepsilon,1]$ is integrable, and $\lim\limits_{\varepsilon\to0+}\int_\varepsilon^1 p(t)\,dt=\infty$.
In this case we also obtain the condition for the existence of solutions of the Cauchy problem
(Theorems \ref{ket-2} and \ref{ket-4}). In section \ref{ss-6} we prove conditions (\eqref{rr-13}, \eqref{rr-130}) for the existence of a solution of  the Cauchy problem for
the equation
\begin{equation*}
   \frac{1}{p(t)} \dot x(t)=-k x(t)+(Tx)(t)+f(t),\quad t\in(0,1],
\end{equation*}
where  $T:\spaceC\to \spaceL$ is a linear bounded operator.

\section{A space of solutions for a singular equation}\label{ss-2}

Representation \eqref{ke-2} establishes a bijection between the spaces $\spaceD$ and  $\spaceL\times\spaceR$. Therefore, $\spaceD$ is the Banach space with respect the norm
\begin{equation}\label{ke-3}
    \norm{x}_{\spaceD}=|x(1)|+\int_0^1|\dot x(s)+\frac{k}{s}x(s)|\,ds.
\end{equation}

Let $k<0$. Denote $\spaceD_-=\spaceD$.
The embedding operator $x\to x$  acts from $\spaceD_-$ into $\spaceAC$
and bounded. Indeed, let  $x\in\spaceD_-$ be a solution of \eqref{ke-1}. Then
\begin{equation*}
    \norm{x}_{\spaceD_-}=|x(1)|+\int_0^1|f(s)|\,ds,
\end{equation*}
\begin{equation*}
\begin{split}
    \norm{x}_{\spaceAC}=|x(1)|+\int_0^1|\dot x(s)|\,ds=|x(1)|+\int_0^1|f(s)-\frac{k}{s}x(s)|\,ds\le \\
    |x(1)|+\int_0^1|f(s)|\,ds+\int_0^1\frac{|k|}{s}|x(s)|\,ds\le \\
    |x(1)|+\int_0^1|f(s)|\,ds+|k|\int_0^1s^{|k|-1}\,ds\,|x(1)|+|k|\int_0^1 t^{|k|-1}\int_t^1 \frac{|f(s)|}{s^{|k|}}\,ds\,dt= \\
    2|x(1)|+\int_0^1|f(s)|\,ds+|k|\int_0^1 t^{|k|-1}\int_t^1 \frac{|f(s)|}{s^{|k|}}\,ds\,dt.
\end{split}
\end{equation*}
Changing the order of integration in the last integral (here and further in \eqref{ke-13}
it is possible on the Fubini theorem since all integrands are non-negative), we have
\begin{equation*}
\begin{split}
\int_0^1 t^{|k|-1}\int_t^1 \frac{|f(s)|}{s^{|k|}}\,ds\,dt=
\int_0^1 \int_0^s  t^{|k|-1}\, dt \frac{|f(s)|}{s^{|k|}}\,ds=\frac{1}{|k|}\int_0^1|f(s)|\,ds.
\end{split}
\end{equation*}
So,
\begin{equation}\label{ke-6}
\begin{split}
    \norm{x}_{\spaceAC}\le 2|x(1)|+2\int_0^1|f(s)|\,ds\le 2 \norm{x}_{\spaceD_-}.
\end{split}
\end{equation}
Therefore, $\dot x\in\spaceL$ for every  $x\in\spaceD_-$, and there exists a finite limit $x(0+)$. Let  all elements of the space $\spaceD_-$ be extended by continuity at $t=0$. For each $x\in\spaceD_-$ there exists a function $f\in\spaceL$ such that
$$\frac{k}{t}x(t)=f(t)-\dot x(t),\quad t\in[0,1],$$
where the right-hand side is integrable on $[0,1]$. Thus,
$$\int_0^1 \frac{1}{t}|x(t)|\,dt< +\infty,$$
what is possible for the continuous function $x$ only if $x(0)=0$. So,
$$\lim\limits_{t\to0} x(t)=0\text{ for all }x\in\spaceD_-.$$
Inequality \eqref{ke-6} means that the space $\spaceD_-$ is continuously embedded in
$\spaceAC$.

For $k>0$ it follows from \eqref{ke-2} that a solution of \eqref{ke-1} has a finite limit at the point $t=0+$ if and only if
\begin{equation}\label{ke-8}
    x(1)=\int_0^1s^kf(s)\,ds.
\end{equation}
In this case
\begin{equation}\label{ke-9}
    x(t)=\int_0^t\left(\frac{s}{t}\right)^kf(s)\,ds,\ t\in(0,1],
\end{equation}
and from \eqref{ke-9} it follows that
$$|x(t)|\le \int_0^t\left(\frac{s}{t}\right)^k|f(s)|\,ds\le \int_0^t|f(s)|\,ds,\quad t\in(0,1],$$
hence $\lim\limits_{t\to0+} x(t) =0$ by the absolute continuity  of the Lebesgue integral.
For $k>0$ denote by $\spaceD_+$  the space of all solutions of \eqref{ke-1} satisfying  \eqref{ke-8} and extended by continuity at $t=0$. With the norm \eqref{ke-3} this space is Banach, isomorphic to  $\spaceL$:
\begin{equation}\label{ke-10}
    \norm{x}_{\spaceD_+}=|x(1)|+\int_0^1|\dot x(s)+\frac{k}{s}x(s)|\,ds.
\end{equation}
A one-to-one correspondence between $\spaceD_+$ and $\spaceL$ is established by \eqref{ke-9}. The space $\spaceD_+$ is continuously embedded in $\spaceAC$: if $f\in\spaceL$ and $x\in\spaceD_+$ is defined by \eqref{ke-9}, then
\begin{equation*}
    \norm{x}_{\spaceD_+}=|x(1)|+\int_0^1|f(s)|\,ds,
\end{equation*}
\begin{equation*}
\begin{split}
    \norm{x}_{\spaceAC}=|x(1)|+\int_0^1|\dot x(s)|\,ds=|x(1)|+\int_0^1|f(s)-\frac{k}{s}x(s)|\,ds\le \\
    |x(1)|+\int_0^1|f(s)|\,ds+\int_0^1\frac{k}{s}|x(s)|\,ds\le \\
    |x(1)|+\int_0^1|f(s)|\,ds+k\int_0^1 \frac{1}{t^{k+1}}\int_0^t s^k |f(s)|\,ds\,dt.
\end{split}
\end{equation*}
Changing the integration order in the last integral,  we have
\begin{equation}\label{ke-13}
\begin{split}
\int_0^1 \frac{1}{t^{k+1}}\int_0^t s^k |f(s)|\,ds\,dt=
\int_0^1 \int_s^1  \frac{1}{t^{k+1}}\, dt\, s^k|f(s)|\,ds=
\int_0^1  \frac{1-s^k}{k} |f(s)|\,ds\le\\
\le\frac{1}{|k|}\int_0^1|f(s)|\,ds.
\end{split}
\end{equation}
Therefore, the space $\spaceD_+$ is continuously embedded in $\spaceAC$:
\begin{equation*}
    \norm{x}_{\spaceAC}\le |x(1)|+2\int_0^1|f(s)|\,ds\le 2 \norm{x}_{\spaceD_+}.
\end{equation*}

Note, that every set $\spaceD_-$, $\spaceD_+$ is a proper subset of the set of functions from $\spaceAC$ satisfying the condition $x(0)=0$. Otherwise, from \eqref{ke-1} for $x(t)=\int_0^t f(s)\,ds$, $t\in[0,1]$, it follows that for any integrable function $f\in\spaceL$ the function
$$f(t)+\frac{k}{t}\int_0^t f(s)\,ds,\quad t\in(0,1],$$
is also integrable, but it is known that the Cesaro operator
$$(Af)(t)=\frac{1}{t}\int_0^t f(s)\,ds,\quad t\in(0,1],$$
does not act in the space $\spaceL$.

Nevertheless, for any $q>1$ the operator $A$ acts in the space $\spaceL_q$ and bounded. If $q=\infty$, it is obviously, if $1<q<\infty$, it follows from one of generalization of Herdy's inequality (see, for example, \cite[p. 294]{Hardy})
$$\int_0^1\left(\frac{1}{t}\int_0^t |f(s)|\,ds\right)^p\,dt<\left(\frac{p}{p-1}\right)^p\int_0^1|f(s)|^p\,ds.$$

Functional differential equations with the ``model'' equation \eqref{ke-1} and $f$ from $\spaceL_q$ for $q>1$ are investigated in \cite{Plaksina2010, Plaksina2011, Plaksina2012}. In this case the correspond spaces $\spaceD_-$, $\spaceD_+$ match  the space of functions from $\spaceAC$ satisfying $\dot x\in\spaceL_q$ and $x(0)=0$. Our work extends \cite{Plaksina2010, Plaksina2011, Plaksina2012} by considering the case $q=1$.

Now it is obviously that the sets $\spaceD_-$, $\spaceD_+$ do not depend on $k\ne0$ and coincide:
$$\{x|x\in\spaceD_-\}=\{x|x\in\spaceD_+\}.$$
For any $k$, such a set coincides with the set $$\w\spaceAC_0\equiv\{x\in\spaceAC:\ x(0)=0,\  \int_0^1\frac{|x(t)|}{t}\,dt<+\infty\}.$$ Indeed, if $x\in\spaceD_-$ or $x\in\spaceD_+$ for some $k\ne0$, then $x(0)=0$, $x\in\spaceAC$, and the equality
$$\frac{x(t)}{t}=\frac{1}{k}\left(f(t)-\dot x(t) \right), \quad t\in(0,1],$$
holds for some $f\in\spaceL$. It implies that $x\in\w\spaceAC_0$. Conversely, if $x\in\w\spaceAC_0$, then the function
$$f(t)=\dot x(t)+k\frac{x(t)}{t}, \quad t\in(0,1],$$
is integrable for each $k\ne0$, therefore $x$ belongs to every sets $\spaceD_-$ and $\spaceD_+$.

For every positive $k$, the space $\spaceD_+$ is isomorphic to the space $\spaceL$, therefore all norms \eqref{ke-10} on the set $\w\spaceAC_0$ for different $k>0$ are equavivalent. For each negative $k$ the space $\spaceD_-$ is isomorphic to the space $\spaceL\times\spaceR$, where
$$\norm{\{f,c\}}_{\spaceL\times\spaceR}=\norm{f}_{\spaceL}+|c|,\quad f\in\spaceL,\quad c\in\spaceR.$$
It means that all norms \eqref{ke-3} on the set $\w\spaceAC_0$ for different $k<0$ are equivalent. Since there does not exist a linear bounded map between  $\spaceL\times\spaceR$ and $\spaceL$, then the norms in the space $\spaceD_+$ and  $\spaceD_-$ are not equivalent.

It follows from \eqref{ke-2}, that
boundary value problem
\begin{equation}\label{ke-15}
\left\{
\begin{array}{l}
    \dot x(t)=-\frac{k}{t}x(t)+f(t),\quad t\in[0,1],\\
    x(1)=c,
\end{array}
\right.
\end{equation}
is uniquely solvable for $k<0$ in the space $\spaceD_-$ (the solution is defined by \eqref{ke-2}). For $k=0$, problem \eqref{ke-15} is uniquely solvable in the space $\spaceAC$.

From  \eqref{ke-9} it follows that
the equation
\begin{equation}\label{ke-16}
    \dot x(t)=-\frac{k}{t}x(t)+f(t),\quad t\in[0,1],\\
\end{equation}
is uniquely solvable in the space $\spaceD_+$  for $k>0$ (the solution $x\in\spaceD_+$ is defined by \eqref{ke-9} for all $f\in\spaceL$). Note, equation \eqref{ke-16} in the space  $\spaceD_+$ can be written in the equivalent form of the Cauchy problem
\begin{equation}\label{ke-17}
\left\{
\begin{array}{l}
    \dot x(t)=-\frac{k}{t}x(t)+f(t),\quad t\in[0,1],\\
    x(0)=0.
\end{array}
\right.
\end{equation}
Now and for  $k=0$ problem \eqref{ke-17} (for the non-singular equation) is uniquely solvable in the space $\spaceAC$.

\section{The Fredholm property of boundary value problems}\label{ss-3}

Consider the equation
\begin{equation}\label{ke-18}
     \dot x(t)=-\frac{k}{t}x(t)+(Tx)(t)+f(t),\quad t\in(0,1],
\end{equation}
in the space $\spaceD_-$ or $\spaceD_+$ (according to the sign of $k\ne0$),
where  $T:\spaceC[0,1]\to \spaceL[0,1]$ is a linear bounded operator.
A solution of \eqref{ke-18} is a function $x\in\w\spaceAC_0$, for which equality \eqref{ke-18} holds for almost all  $t\in[0,1]$.

Let $\ell:\spaceAC\to\spaceR$ be a linear bounded functional.

Since $\spaceD_-$ is continuously embedded into $\spaceAC$, then for $k<0$
the boundary value problem
\begin{equation}\label{ke-19}
\left\{
\begin{array}{l}
    \dot x(t)=-\dfrac{k}{t}x(t)+(Tx)(t)+f(t),\quad t\in[0,1],\\
    \ell x=c,
\end{array}
\right.
\end{equation}
has the Fredholm property. Let us prove it. The problem
\begin{equation}\label{ke-20}
\left\{
\begin{array}{l}
    \dot x(t)=-\dfrac{k}{t}x(t)+(Tx)(t)+f(t),\quad t\in[0,1],\\
    x(1)=c,
\end{array}
\right.
\end{equation}
has the Fredholm property. Indeed, from \eqref{ke-2} it follows that \eqref{ke-20} is equivalent to the equation in the space $\spaceD_-$:
\begin{equation*}
    x(t)=-\int_t^1\left(\frac{t}{s}\right)^{|k|}(Tx)(s)\,ds +ct^{|k|}-\int_t^1\left(\frac{t}{s}\right)^{|k|}f(s)\,ds,\quad t\in[0,1],
\end{equation*}
which can be rewritten as
\begin{equation}\label{ke-21}
    x(t)=(\Lambda Tx)(t) + g,
\end{equation}
where
$$(\Lambda z)(t)=-\int_t^1\left(\frac{t}{s}\right)^{|k|}z(s)\,ds\quad t\in[0,1],$$
$$g(t)=ct^{|k|}-\int_t^1\left(\frac{t}{s}\right)^{|k|}f(s)\,ds,\quad t\in[0,1],$$
$\Lambda:\spaceL\to\spaceD_-$ is a linear bounded operator, $g\in\spaceD_-$.

Since the space $\spaceD_-$ is continuously embedded in $\spaceAC$, which is continuously embedded in $\spaceC$, then the linear operator $\Lambda$ acts from $\spaceL$ into $\spaceC$ and bounded. So, we can consider \eqref{ke-21} in  $\spaceC$: every solution from $\spaceC$ is a solution from  $\spaceD_-$ and visa versa.

From Lemma 2 in \cite{Bravyi1}, the operator $I-\Lambda T:\spaceC\to\spaceC$
($I:\spaceC\to\spaceC$ is the identical operator) has the Fredholm property.
Every linear bounded operator $T:\spaceC\to\spaceL$ is weakly completely continuous \cite[VI.4.5]{DSh}. Therefore, operator  $\Lambda T:\spaceC\to\spaceC$ is
weakly completely continuous, and the product of such operators, the operator $(\Lambda T)^2:\spaceC\to\spaceC$, is completely continuous \cite[VI.7.5]{DSh}.
By Nikolsky's theorem  \cite[p. 504]{Kantorovich-Akiliv}, the operator $I-\Lambda T$ has the Fredholm property. Thus, problem \eqref{ke-20} and its finite-dimensional perturbation   \eqref{ke-19} have the Fredholm property.

In particular, problem \eqref{ke-19} is uniquely solvable if and only if the homogeneous problem
\begin{equation}\label{ke-23}
\left\{
\begin{array}{l}
    \dot x(t)=-\dfrac{k}{t}x(t)+(Tx)(t),\quad t\in[0,1],\\
    \ell x=0,
\end{array}
\right.
\end{equation}
has only the trivial solution in the space $\spaceD_-$.

Similarly, for $k>0$ using the continuity of the embedding the space $\spaceD_+$ into $\spaceAC$, we prove that the equation
\begin{equation}\label{ke-24}
    \dot x(t)=-\dfrac{k}{t}x(t)+(Tx)(t)+f(t),\quad t\in[0,1],
\end{equation}
or the equivalent Cauchy problem
\begin{equation}\label{ke-25}
\left\{
\begin{array}{l}
    \dot x(t)=-\dfrac{k}{t}x(t)+(Tx)(t)+f(t),\quad t\in[0,1],\\
    x(0)=0,
\end{array}
\right.
\end{equation}
in the space  $\spaceD_+$ has the Fredholm property for every linear bounded operator $T:\spaceC\to\spaceL$. Therefore, \eqref{ke-24} has a unique solution $x\in\spaceD_+$ for every  $f\in\spaceL$ if and only if the equation
\begin{equation*}
    \dot x(t)=-\dfrac{k}{t}x(t)+(Tx)(t),\quad t\in[0,1],
\end{equation*}
and the Cauchy problem
\begin{equation}\label{ke-27}
\left\{
\begin{array}{l}
    \dot x(t)=-\dfrac{k}{t}x(t)+(Tx)(t),\quad t\in[0,1],\\
    x(0)=0,
\end{array}
\right.
\end{equation}
have only the trivial solution in the space $\spaceD_+$.

\section{The solvability conditions for linear boundary value problems}\label{ss-4}

Here we suppose that a linear bounded operator $T:\spaceC\to\spaceL$ is regular, that is it has the representation as a difference of positive (in a sense of conus of nonnegative functions) operators:
\begin{equation*}
    T=T^+-T^-,
\end{equation*}
where linear operators $T^+$, $T^-:\spaceC\to\spaceL$ are positive (map nonnegative functions into almost everywhere nonnegative functions). It is clear, that the norms of such operators are defined by the equality
\begin{equation*}
    \norm{T^{+/-}}_{\spaceC\to\spaceL}=\int_0^1 (T^{+/-}\unit)(t)\,dt,
\end{equation*}
where $\unit$ is the unit function: $\unit(t)=1$ for all $t\in[0,1]$.
Note, every regular operator $T:\spaceC\to\spaceL$ is $u$-bounded, that is there exists the function $u\in\spaceL$ such that for every  $x\in\spaceC$ the inequality
\begin{equation*}
    |(Tx)(t)|\le u(t)\norm{x}_{\spaceC}\text{ for almost all $t\in[0,1]$}
\end{equation*}
holds \cite[Theorem 1.27, p.~234]{KVP}.

An operator $T:\spaceC\to\spaceL$ is called a Volterra operator if for every $t^*\in(0,1)$ and  every $x\in\spaceC$ such that  $x(t)=0$ for all $t\in[0,t^*]$, the equality $(Tx)(t)=0$ holds for almost all $t\in[0,t^*]$.

If a regular operator $T$ is a Volterra operator, it is easy to show that problem \eqref{ke-27}, which is equivalent to the equation
\begin{equation}\label{ke-31}
    x(t)=\int_0^t\left(\frac{s}{t}\right)^k(Tx)(s)\,ds,\ t\in(0,1],
\end{equation}
has only the trivial solution. Therefore, for  $k>0$ the Cauchy problem \eqref{ke-25} is uniquely solvable in the space  $\spaceD_+$.

An operator $T:\spaceC\to\spaceL$ is called a Volterra operator with respect to the point $t=1$ if for every $t^*\in(0,1)$ and every $x\in\spaceC$ such that $x(t)=0$ for all $t\in[t^*,1]$, the equality $(Tx)(t)=0$ holds for almost all $t\in[t^*,1]$.

If a regular operator $T$ is a Volterra operator
with respect to the point $t=1$, then
it is easy to show that for $\ell x=x(1)$  the homogeneous  problem \eqref{ke-23}, which is equivalent to the equation
\begin{equation}\label{ke-32}
    x(t)=-\int_t^1\left(\frac{t}{s}\right)^{|k|}(Tx)(s)\,ds,\quad t\in[0,1],
\end{equation}
has only the trivial solution. Therefore, for  the Cauchy problem \eqref{ke-20} is uniquely solvable in the space  $\spaceD_-$.

Define the operators $\Lambda_-$, $\Lambda_+:\spaceL\to\spaceC$ by the equalities
\begin{equation*}
(\Lambda_-f)(t)=-\int_t^1\left(\frac{t}{s}\right)^{|k|}f(s)\,ds,\quad t\in[0,1],
\end{equation*}
\begin{equation*}
(\Lambda_+f)(t)=\int_0^t\left(\frac{s}{t}\right)^k f(s)\,ds,\ t\in(0,1].
\end{equation*}

For any linear bounded operators $T:\spaceC\to\spaceL$ the condition on the spectral radius 
\begin{equation*}
\rho(\Lambda_-T)<1
\end{equation*}
guarantees the unique solvability of problem \eqref{ke-20}, and the condition
\begin{equation*}
\rho(\Lambda_+T)<1
\end{equation*}
guarantees the unique solvability of problem \eqref{ke-25}. In particular, if
\begin{equation}\label{ke-36}
    \norm{T}_{\spaceC\to\spaceL}\le1,
\end{equation}
both problems \eqref{ke-20}, \eqref{ke-25} are uniquely solvable. In this case, the norms of $\Lambda_+T$ and $\Lambda_-T$ are less than unit, therefore, equations \eqref{ke-31}, \eqref{ke-32} have only the trivial solution.

It turns out, if we know the norms of positive and negative parts $T^+$, $T^-$ of a regular operator $T$, then the solvability condition \eqref{ke-36} can be weakened.


\begin{theorem}\label{ket-1} Let non-negative numbers $\calT^+$, $\calT^-$ be given,  $k>0$.
The inequalities
\begin{equation}\label{ke-38}
 \calT^+\le1, \quad   \calT^-\le 2\sqrt{1-\calT^+},
\end{equation}
are necessary and sufficient for
problem \eqref{ke-25} in the space  $\spaceD_+$ to be uniquely solvable for all operators $T=T^+-T^-$ such that linear positive operators $T^+$, $T^-:\spaceC\to\spaceL$ satisfy the equalities
\begin{equation}\label{ke-37}
        \norm{T^+}_{\spaceC\to\spaceL}=\calT^+,\quad
        \norm{T^-}_{\spaceC\to\spaceL}=\calT^-.
\end{equation}
\end{theorem}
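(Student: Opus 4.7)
The plan is to apply the Fredholm alternative from Section~\ref{ss-3}, reducing problem \eqref{ke-25} to the triviality of its homogeneous version, and to treat the two directions of the biconditional separately. Throughout the proof I would use the identity
\[t^k x(t)=\int_0^t s^k(Tx)(s)\,ds,\quad t\in[0,1],\]
obtained by multiplying the equation by $t^k$ and integrating from $0$, where the boundary term vanishes because $x(0)=0$ and $x\in\spaceAC$.

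\emph{Sufficiency.} Suppose $x\in\spaceD_+$ is a nontrivial solution of the homogeneous problem. If $x$ does not change sign, say $x\ge 0$ with $M=\max x>0$ attained at $t_*$, then $(T^+x)(s)\le M(T^+\unit)(s)$ together with $(T^-x)(s)\ge 0$ gives $Mt_*^k\le M\int_0^{t_*}s^k(T^+\unit)(s)\,ds\le Mt_*^k\calT^+$; since $(s/t_*)^k<1$ almost everywhere on $[0,t_*)$, this would force $T^+\unit$ to concentrate at $t_*$, impossible for an $\spaceL$-function. If $x$ changes sign, set $M_+=\max x>0$, $M_-=-\min x>0$, attained at $t_+$, $t_-$ respectively, and let $c$ and $d$ be the largest zeros of $x$ in $[0,t_+)$ and $[0,t_-)$. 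Then $x>0$ on $(c,t_+]$, $x<0$ on $(d,t_-]$, so $I_+=[c,t_+]$ and $I_-=[d,t_-]$ are disjoint. Applying the identity on $I_\pm$, with $x(c)=x(d)=0$, and using the envelope $-M_-\le x\le M_+$, yields
\[M_+(1-a_+)\le M_- b_+,\qquad M_-(1-a_-)\le M_+ b_-,\]
where $a_\pm=t_\pm^{-k}\int_{I_\pm}s^k(T^+\unit)(s)\,ds$ and $b_\pm=t_\pm^{-k}\int_{I_\pm}s^k(T^-\unit)(s)\,ds$. Disjointness of $I_\pm$ and $(s/t_\pm)^k\le 1$ give $a_++a_-\le\calT^+$ and $b_++b_-\le\calT^-$, while the same weight argument as in the monotone case shows $a_\pm<1$. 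Multiplying the two scalar inequalities and combining $(1-a_+)(1-a_-)\ge 1-a_+-a_-$ with AM-GM for $b_+b_-$ produces
\[1-\calT^+\le(1-a_+)(1-a_-)\le b_+b_-\le\bigl(\tfrac{b_++b_-}{2}\bigr)^2\le(\calT^-/2)^2,\]
contradicting $(\calT^-)^2<4(1-\calT^+)$.

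\emph{Necessity.} For each pair $(\calT^+,\calT^-)$ violating \eqref{ke-38} I would construct operators $T^\pm$ with $\norm{T^\pm}_{\spaceC\to\spaceL}=\calT^\pm$ under which the homogeneous problem admits a nonzero solution. The extremal configuration singled out by the sufficiency analysis is the template: a sign-changing $x$ with a single interior zero, $T^+$ supported on only one of $I_\pm$ (forcing $a_+a_-=0$), and $T^-$ split into two equal masses $\calT^-/2$ concentrated near $t_+$ and $t_-$. Choosing $(T^\pm x)(s)=p^\pm(s)x(\tau_\pm)$ with suitable densities $p^\pm\in\spaceL$ and evaluation points $\tau_\pm\in[0,1]$, I would tune the parameters so that every inequality in the sufficiency chain becomes an equality, producing the required nontrivial $x$.

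The principal obstacle is the boundary equality case $(\calT^-)^2=4(1-\calT^+)$ in sufficiency: every inequality in the chain is tight, and one must extract at least one genuinely strict inequality, most naturally from $(s/t_\pm)^k<1$ on a positive-measure subset of $I_\pm$ whenever the generators $T^\pm\unit$ are not point masses. On the necessity side the parallel difficulty is matching the prescribed norms exactly (rather than approximately), which is handled by the two-parameter construction sketched above.
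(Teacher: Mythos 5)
Your sufficiency argument takes a genuinely different route from the paper's, and it works. The paper first reduces (via Lemma~\ref{kel-1}, imported without proof from \cite{bravyi2}, \cite{bravyi3}) to two-point operators $(Tx)(t)=p_1(t)x(t_1)+p_2(t)x(t_2)$ and then minimizes a $2\times2$ determinant $\Delta$ over all admissible densities and nodes; you instead test the identity $t^kx(t)=\int_0^t s^k(Tx)(s)\,ds$ at the extremum points of a hypothetical nontrivial solution, which bypasses the reduction lemma entirely and is more self-contained. The boundary case $(\calT^-)^2=4(1-\calT^+)$, which you rightly flag as the delicate point, does close along the lines you indicate: tightness of $b_++b_-\le\calT^-$ forces $(s/t_\pm)^k\,(T^-\unit)(s)=(T^-\unit)(s)$ a.e.\ on $I_\pm$, hence $T^-\unit=0$ a.e.\ there and $b_\pm=0$, which contradicts $M_+(1-a_+)\le M_-b_+$ because $a_+<1$ and $M_+>0$ (and the same mechanism disposes of the sign-definite case when $\calT^+=1$). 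So this half is complete once that equality bookkeeping is written out.

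The necessity half, however, has a genuine gap. Your stated plan is to ``tune the parameters so that every inequality in the sufficiency chain becomes an equality,'' but your own equality analysis shows this is impossible: equality throughout forces $T^+\unit$ and $T^-\unit$ to be point masses, which are not in $\spaceL$. What is actually needed (and what the paper does) is different in kind: (i) use that failure of \eqref{ke-38} is the \emph{strict} inequality $1-\calT^+-(\calT^-)^2/4<0$, so the point-mass template can be approximated by admissible $\spaceL$-densities closely enough that the $2\times2$ determinant of the associated two-point system becomes \emph{negative}; (ii) observe that the determinant is positive for other admissible configurations (e.g.\ $t_2$ small), so by continuity over the connected set of admissible parameters there is a configuration where it vanishes exactly, producing the nontrivial solution; and (iii) verify that the resulting two-point operator admits a decomposition $T=T^+-T^-$ with $\norm{T^\pm}_{\spaceC\to\spaceL}$ equal to the prescribed $\calT^\pm$ --- this is precisely what conditions \eqref{ke-40}--\eqref{ke-41} of Lemma~\ref{kel-1} encode, and it does not follow automatically from writing $(T^\pm x)(s)=p^\pm(s)x(\tau_\pm)$. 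Step (ii) in particular cannot be replaced by equality-tuning, and none of (i)--(iii) is carried out in your sketch, so necessity remains unproved as written.
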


For proving Theorem \ref{ket-1}  we need the following lemma, which can be proved as the analogous assertions from \cite{bravyi2}, \cite{bravyi3}.

\begin{lemma}\label{kel-1} Let non-negative numbers $\calT^+$, $\calT^-$ be given, $k>0$.  Problem \eqref{ke-25}  in $\spaceD_+$ is uniquely solvable for all operators $T=T^+-T^-$ such that linear positive operators $T^+$, $T^-:\spaceC\to\spaceL$ satisfy \eqref{ke-37}
if and only if the problem
\begin{equation}\label{ke-39}
\left\{
\begin{array}{l}
    \dot x(t)=-\dfrac{k}{t}x(t)+p_1(t)x(t_1)+p_2(t)x(t_2),\quad t\in[0,1],\\
    x(0)=0,
\end{array}
\right.
\end{equation}
has only the trivial solution for all $0\le t_1< t_2\le1$ and all $p_1$, $p_2\in\spaceL$ such that there exist
\begin{equation}\label{ke-41}
    p^+,p^-\in\spaceL,\quad p^+\ge0,\quad p^-\ge0, \quad \norm{p^{+}}=\calT^{+}, \quad \norm{p^{-}}=\calT^{-},
\end{equation}
and the conditions
\begin{equation}\label{ke-40}
    p_1+p_2=p^+-p^-,\quad     -p^-\le p_i\le p^+,\quad i=1,2,
\end{equation}
hold.
\end{lemma}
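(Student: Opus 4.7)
The strategy is a two-direction equivalence of the standard Bravyi reduction type: we must (i) embed the two-point data $(t_1,t_2,p_1,p_2)$ of \eqref{ke-39} into a regular operator $T=T^+-T^-$ of the prescribed norms, and (ii) conversely, extract from any hypothetical nontrivial solution of \eqref{ke-25} the two-point structure of \eqref{ke-39} by reading off the extremal points of $x$ and using the positivity of $T^\pm$ to collapse $Tx$ onto a convex combination of two point evaluations.

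For the direction ``\eqref{ke-25} uniquely solvable $\Longrightarrow$ \eqref{ke-39} has only the trivial solution'', start from admissible data $p^\pm, p_1, p_2$ satisfying \eqref{ke-41}, \eqref{ke-40} and set
$$p_1^+:=\max(p_1,0),\quad p_1^-:=\max(-p_1,0),\quad p_2^\pm:=p^\pm-p_1^\pm.$$
The bounds $-p^-\le p_1\le p^+$ give $p_i^\pm\ge0$, and a short computation using $p_1+p_2=p^+-p^-$ yields $p_i=p_i^+-p_i^-$ and $p_1^\pm+p_2^\pm=p^\pm$. Define the positive operators $(T^\pm x)(t):=p_1^\pm(t)x(t_1)+p_2^\pm(t)x(t_2)$ from $\spaceC$ to $\spaceL$. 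Since $(T^\pm\unit)(t)=p^\pm(t)$, we get $\norm{T^\pm}_{\spaceC\to\spaceL}=\calT^\pm$, and $T=T^+-T^-$ reproduces the right-hand side of \eqref{ke-39}; hence a nontrivial solution of \eqref{ke-39} is a nontrivial solution of \eqref{ke-25}.

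For the converse direction, assume \eqref{ke-39} has only the trivial solution for every admissible data and suppose, for contradiction, that $x\in\spaceD_+\subset\spaceC$ is a nontrivial solution of \eqref{ke-25} for some $T=T^+-T^-$ with $\norm{T^\pm}_{\spaceC\to\spaceL}=\calT^\pm$. Put $p^\pm:=T^\pm\unit$, $m:=\min_{[0,1]}x=x(\tau_1)$, $M:=\max_{[0,1]}x=x(\tau_2)$. Since $x(0)=0$ and $x\not\equiv0$, one has $m<M$, hence $\tau_1\ne\tau_2$. The positivity of $T^\pm$ together with $m\le x(t)\le M$ gives $m\,p^\pm(t)\le(T^\pm x)(t)\le M\,p^\pm(t)$ almost everywhere, so the measurable functions
$$\lambda_\pm(t):=\frac{(T^\pm x)(t)-m\,p^\pm(t)}{(M-m)\,p^\pm(t)}\ \text{ on }\ \{p^\pm>0\},\qquad \lambda_\pm(t):=0\ \text{ otherwise},$$
take values in $[0,1]$ and satisfy $(T^\pm x)(t)=\lambda_\pm(t)M\,p^\pm(t)+(1-\lambda_\pm(t))m\,p^\pm(t)$. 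Grouping the coefficients of $m=x(\tau_1)$ and $M=x(\tau_2)$ produces $(Tx)(t)=p_1(t)x(\tau_1)+p_2(t)x(\tau_2)$ with
$$p_1:=(1-\lambda_+)p^+-(1-\lambda_-)p^-,\qquad p_2:=\lambda_+p^+-\lambda_-p^-.$$
Direct inspection gives $p_1+p_2=p^+-p^-$ and $-p^-\le p_i\le p^+$, so after relabelling $\{\tau_1,\tau_2\}$ as $t_1<t_2$ (swapping $p_1,p_2$ accordingly), the function $x$ is a nontrivial solution of \eqref{ke-39} for admissible data, contradicting the hypothesis.

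The main obstacle is the converse direction: one must collapse the abstract operator $T^\pm$ onto two point evaluations without losing either the sign constraints \eqref{ke-40} or the exact norms $\calT^\pm$. The crucial device is the convex-combination representation of $T^\pm x$ on the interval $[m,M]$, together with the identity $\norm{T^\pm}_{\spaceC\to\spaceL}=\int_0^1(T^\pm\unit)(t)\,dt$ valid for positive operators, which is how the prescribed norms survive the reduction. The null sets $\{p^\pm=0\}$ cause no difficulty since $(T^\pm x)(t)=0$ there and the value of $\lambda_\pm$ is irrelevant; likewise the possibility $t_1=0$ is permitted by the statement of the lemma.
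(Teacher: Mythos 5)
The paper does not actually prove Lemma~\ref{kel-1}; it delegates the argument to \cite{bravyi2,bravyi3}, so there is no in-text proof to compare against. Your reconstruction is the standard reduction that those references use, and its two key computations are correct: the synthesis step (splitting $p_1=p_1^+-p_1^-$ by positive/negative parts and setting $p_2^{\pm}=p^{\pm}-p_1^{\pm}$, which the inequalities $-p^-\le p_1\le p^+$ make nonnegative, so that $(T^{\pm}\unit)=p^{\pm}$ and the norms \eqref{ke-37} are exact), and the collapse step (using positivity of $T^{\pm}$ to trap $(T^{\pm}x)(t)$ between $m\,p^{\pm}(t)$ and $M\,p^{\pm}(t)$ and writing it as a convex combination of the two extremal values, which yields $p_1,p_2$ satisfying \eqref{ke-40}, \eqref{ke-41} with $m<M$ guaranteeing two distinct argument points).

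The one point you should tighten is the logical bookkeeping between the inhomogeneous problem \eqref{ke-25} and its homogeneous counterpart \eqref{ke-27}. In the converse direction you start from ``a nontrivial solution of \eqref{ke-25}'', but \eqref{ke-25} contains $f$, and failure of unique solvability could a priori mean non-existence for some $f$ rather than non-uniqueness; your collapse argument only produces a nontrivial solution of \eqref{ke-39} from a nontrivial solution of the \emph{homogeneous} problem \eqref{ke-27}. The missing sentence is the Fredholm alternative established in Section~\ref{ss-3} of the paper: \eqref{ke-25} is uniquely solvable in $\spaceD_+$ for every $f\in\spaceL$ if and only if \eqref{ke-27} has only the trivial solution. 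With that invoked explicitly (and with the forward direction phrased as: a nontrivial solution of \eqref{ke-39} is a nontrivial solution of \eqref{ke-27} for the synthesized $T$, destroying uniqueness already for $f=0$), the proof is complete.
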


\begin{proof}[Proof of Theorem \ref{ket-1}]
Solve problem \eqref{ke-39}, which is equivalent to
\begin{equation*}
    x(t)=\int_0^t\left(\frac{s}{t}\right)^k(p_1(s)x(t_1)+p_2(s)x(t_2))\,ds,\quad t\in(0,1],
\end{equation*}
in the space $\spaceC$. This equation has only the trivial solution if and only if
\begin{equation*}
    \Delta=\left|
           \begin{array}{cc}\dd
           1-\int_0^{t_1}\left(\dfrac{s}{{t_1}}\right)^kp_1(s)\,ds & \dd           -\int_0^{t_1}\left(\dfrac{s}{{t_1}}\right)^kp_2(s)\,ds\\
\dd           -\int_0^{t_2}\left(\dfrac{s}{{t_2}}\right)^kp_1(s)\,ds & \dd 1-\int_0^{t_2}\left(\dfrac{s}{{t_2}}\right)^kp_2(s)\,ds
           \end{array}
           \right| \ne0.
\end{equation*}
For all $p_1$, $p_2$ satisfying \eqref{ke-40}, \eqref{ke-41}, we have
\begin{equation*}
    \Delta=\left|
           \begin{array}{cc}\dd
           1-\int_0^{t_1}\left(\dfrac{s}{{t_1}}\right)^kp_1(s)\,ds & \dd           1-\int_0^{t_1}\left(\dfrac{s}{{t_1}}\right)^k(p^+(s)-p^-(s))\,ds\\
\dd           -\int_0^{t_2}\left(\dfrac{s}{{t_2}}\right)^kp_1(s)\,ds & \dd 1-\int_0^{t_2}\left(\dfrac{s}{{t_2}}\right)^k(p^+(s)-p^-(s))\,ds
           \end{array}
           \right| .
\end{equation*}
Our aim is to determine for which $\calT^+$, $\calT^-$ the inequality $\Delta>0$ is fulfilled for all $p_1$, $p^+$, $p^-$ such that \eqref{ke-40}, \eqref{ke-41}, and for all
$0\le t_1< t_2\le1$.

For $p_1=0$, we have
\begin{equation*}
    \Delta=\dd     1-\int_0^{t_2}\left(\dfrac{s}{{t_2}}\right)^k(p^+(s)-p^-(s))\,ds.
\end{equation*}
All values of $\Delta\ne0$ are positive for fixed $p^+$, $p^-$
and for sufficient small $t_2>0$. Therefore, for the inequality $\Delta\ne0$ to be fulfilled for all permissible parameters, it is necessary that $\Delta>0$ for all permissible parameters.

For fixed $t_2$, $\Delta$ is minimal if the function $p^+\in\spaceL$ is ``concentrated'' at the point $s=t_2$, and the function $p^-\in\spaceL$ is ``concentrated'' at the point $s=0$. Therefore,
the inequality
\begin{equation}\label{ke-46}
    \norm{p^+}=\calT^+\le1
\end{equation}
is necessary for $\Delta>0$ provided \eqref{ke-40}, \eqref{ke-41}.

We have
\begin{equation*}
\begin{split}
    \Delta=\alpha-\alpha\int_0^{t_1}\left(\dfrac{s}{{t_1}}\right)^kp_1(s)\,ds+
    \beta\int_0^{t_2}\left(\dfrac{s}{{t_2}}\right)^kp_1(s)\,ds=\\
    =\alpha+\int_0^{t_1}\left(-\alpha\left(\dfrac{s}{{t_1}}\right)^k+
    \beta\left(\dfrac{s}{{t_1}}\right)^k\right)p_1(s)\,ds
    +\beta\int_{t_1}^{t_2}\left(\dfrac{s}{{t_2}}\right)^k p_1(s)\,ds,
\end{split}
\end{equation*}
where
    $\alpha= 1-\int_0^{t_2}\left(\dfrac{s}{{t_2}}\right)^k(p^+(s)-p^-(s))\,ds>0$,
    $\beta= 1-\int_0^{t_1}\left(\dfrac{s}{{t_1}}\right)^k(p^+(s)-p^-(s))\,ds>0$,
since inequality \eqref{ke-46} holds. So, for fixed $t_1<t_2$, $p^+$, $p^-$, the functional $\Delta$ takes its minimum if
$$p_1(t)=-p^-(t),\quad t\in[t_1,t_2],$$
and
$$p_1(t)=p^+(t),\quad t\in[0,t_1),\ \text{ or }\ 
p_1(t)=-p^-(t),\quad t\in[0,t_1).$$
If
$$p_1(t)=-p^-(t),\quad t\in[0,t_2],$$
it is easy to see that $\Delta>0$ for any other parameters.
Consider the rest case
\begin{equation*}
    p_1(t)=
    \left\{
    \begin{array}{ll}
    p^+(t),&\quad t\in[0,t_1],\\
    -p^-(t),&\quad t\in(t_1,t_2].
\end{array}
\right.
\end{equation*}
Then
\begin{equation*}
    \Delta=\left|
           \begin{array}{cc}\dd
           1-\int_0^{t_1}\left(\dfrac{s}{{t_1}}\right)^kp^+(s)\,ds & \dd           \int_0^{t_1}\left(\dfrac{s}{{t_1}}\right)^kp^-(s)\,ds\\[30pt]
\begin{aligned}
\dd           -\int_0^{t_1}\left(\dfrac{s}{{t_2}}\right)^kp^+(s)\,ds+ \\
+             \int_{t_1}^{t_2}\left(\dfrac{s}{{t_2}}\right)^kp^-(s)\,ds
 \end{aligned}
&
  \begin{aligned}
   \dd 1-\int_{t_1}^{t_2}\left(\dfrac{s}{{t_2}}\right)^kp^+(s)\,ds+\\
   +\int_0^{t_1}\left(\dfrac{s}{{t_2}}\right)^kp^-(s)\,ds
  \end{aligned}
 \end{array}
\right| .
\end{equation*}
Now $\Delta$ takes its minimal value if  the functions $p^+$, $p^-$ are ``concentrated'' at $t=t_2$ on the interval $[t_1,t_2]$, and at $t=0$ or $t=t_1$ on the interval $[0,t_1)$.

Using notation
\begin{equation*}
\begin{split}
\calT^+_1=\int_0^{t_1}p^+(s)\,ds,\quad \calT^-_1=\int_0^{t_1}p^-(s)\,ds,\\
\calT^+_2=\int_{t_1}^{t_2}p^+(s)\,ds,\quad \calT^-_2=\int_{t_1}^{t_2}p^-(s)\,ds,
\end{split}
\end{equation*}
we get
\begin{equation}\label{ke-52}
    \Delta>
    \left|
           \begin{array}{cc}1-K^+ & K^- \\           -\left(\dfrac{t_1}{{t_2}}\right)^kK^++\calT^-_2&\left(\dfrac{t_1}{{t_2}}\right)^kK^-+1-\calT^+_2 \end{array}
\right|\equiv \Delta_1,
\end{equation}
where $K^+=0$ or $K^+=\calT^+_1$, $K^-=0$ or $K^-=\calT^-_1$. It follows that $\Delta$ can be arbitrarily closely to the right hand side of inequality \eqref{ke-52}. It is easy to see that $\Delta_1$ takes its minimal value at $t_1=0$.
In this case
\begin{equation*}
\Delta_1=(1-K^+)(1-\calT_2^+)-K^-\,\calT_2^-,
\end{equation*}
and $\Delta_1$ takes its minimal value, in particular, for
 $$K^+=\calT^+_1=0,\quad \calT^+_1=\calT^+,\quad K^-=\calT^-_1=\calT^-/2,\quad \calT^-_2=\calT^-/2.$$
Then
$$\Delta_1=1-\calT^+-(\calT^-)^2/4,$$
hence, $\Delta>0$ for all admissible parameters if and only if inequalities \eqref{ke-38} hold.
\end{proof}

Now consider problem \eqref{ke-20} for $k<0$.
\begin{theorem}\label{ket-3} Let non-negative numbers $\calT^+$, $\calT^-$ and $k<0$ be given.
For problem \eqref{ke-20} in the space $\spaceD_-$ to be uniquely solvable for all operators  $T=T^+-T^-$ such that linear positive operators $T^+$, $T^-:\spaceC\to\spaceL$ satisfy the equalities
\begin{equation}\label{ke-137}
        \norm{T^+}_{\spaceC\to\spaceL}=\calT^+,\quad
        \norm{T^-}_{\spaceC\to\spaceL}=\calT^-,
\end{equation}
it is necessary and sufficient that
\begin{equation}\label{ke-138}
 \calT^-\le1, \quad   \calT^+\le 2\,\sqrt{1-\calT^-}.
\end{equation}
\end{theorem}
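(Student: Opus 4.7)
The plan is to run the proof of Theorem \ref{ket-1} in mirror image. Under the sign change of $k$, the explicit representation \eqref{ke-2} (integration from $t$ to $1$, anchored at $t=1$) replaces formula \eqref{ke-9} (integration from $0$ to $t$, anchored at $t=0$), so that the endpoints $t=0$ and $t=1$ exchange roles throughout the concentration arguments. This is precisely what interchanges $\calT^+$ and $\calT^-$ in the final estimate, producing \eqref{ke-138} in place of \eqref{ke-38}.

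First I would state and apply the analog of Lemma \ref{kel-1} for problem \eqref{ke-20}: by the Fredholm property established in Section \ref{ss-3}, unique solvability in $\spaceD_-$ for all admissible regular $T=T^+-T^-$ with $\norm{T^\pm}_{\spaceC\to\spaceL}=\calT^\pm$ is equivalent to the homogeneous two-point problem
\begin{equation*}
\left\{\begin{array}{l} \dot x(t) = -\dfrac{k}{t} x(t) + p_1(t) x(t_1) + p_2(t) x(t_2),\quad t\in[0,1],\\ x(1) = 0,\end{array}\right.
\end{equation*}
admitting only the zero solution for all $0\le t_1<t_2\le 1$ and all $p_1,p_2\in\spaceL$ subject to the analogs of \eqref{ke-40}, \eqref{ke-41}. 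Using \eqref{ke-2} with $x(1)=0$ and evaluating at $t=t_1,t_2$ reduces triviality to the nonvanishing of the determinant
$$\Delta = \left|\begin{array}{cc} 1 + \int_{t_1}^1 (t_1/s)^{|k|} p_1(s)\, ds & \int_{t_1}^1 (t_1/s)^{|k|} p_2(s)\, ds \\ \int_{t_2}^1 (t_2/s)^{|k|} p_1(s)\, ds & 1 + \int_{t_2}^1 (t_2/s)^{|k|} p_2(s)\, ds \end{array}\right|.$$
Adding the first column to the second and using $p_1+p_2=p^+-p^-$ rewrites $\Delta$ in a form where the second column depends only on $p^+-p^-$, matching the structure exploited in Theorem \ref{ket-1}.

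Necessity of $\calT^-\le 1$ then follows by setting $p_1\equiv 0$, placing $p^+$ inside $[0,t_2]$ (so it contributes nothing to the integrals from $t_2$ to $1$) and concentrating $p^-$ at $s=t_2$, where the kernel $(t_2/s)^{|k|}$ equals $1$; the limiting value of $\Delta$ is $1-\calT^-$, and continuity together with $\Delta=1$ at $t_2=1$ forces $\Delta>0$ throughout the admissible parameter set. For the sharp joint condition, $\Delta$ is affine in $p_1\in[-p^-,p^+]$ and thus attains its minimum at a piecewise-extremal choice $p_1=p^+$ on one sub-arc and $p_1=-p^-$ on the other. Mimicking the case analysis of Theorem \ref{ket-1} --- with the distinguished split points and the ``$t_1\to 0$'' limit replaced by their $k<0$ counterparts --- the critical configuration concentrates all of $\calT^-$ at a single extremal point and splits $\calT^+$ into two equal halves of mass $\calT^+/2$ at the two natural concentration points dictated by the kernels $(t_i/s)^{|k|}$. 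This yields $\Delta_{\min}=1-\calT^- -(\calT^+)^2/4$, and $\Delta_{\min}\ge 0$ is equivalent to \eqref{ke-138}.

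The main obstacle is bookkeeping rather than ideas: because integration now runs from $t$ upward to $1$, the natural concentration endpoints and the distinguished subintervals are reversed relative to the $k>0$ proof, and the kernel $(t_i/s)^{|k|}$ depends asymmetrically on $t_1,t_2$. One therefore has to verify carefully that the proper mirror of the extremal configuration from Theorem \ref{ket-1} --- half of $\calT^+$ at each of the two natural concentration points, and all of $\calT^-$ at the remaining extremal point --- is genuinely optimal among all piecewise-extremal choices of $p_1,p^+,p^-$, so that the quadratic term $(\calT^+)^2/4$ emerges with the sharp coefficient.
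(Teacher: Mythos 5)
Your proposal is correct and follows essentially the same route as the paper: reduction via the analog of Lemma \ref{kel-1} (the paper's Lemma \ref{kel-2}) to the two-point problem with $x(1)=0$, the determinant criterion with the second column rewritten through $p_1+p_2=p^+-p^-$, the concentration argument giving $\calT^-\le1$, and the affine minimization over $p_1$ leading to the extremal configuration with $\calT^+$ split into two halves and $\calT^-$ concentrated at one point, yielding $\Delta_{\min}=1-\calT^--(\calT^+)^2/4$. This matches the paper's proof of Theorem \ref{ket-3} in both structure and the final computation.
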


For proving Theorem \ref{ket-3}, we need an analog of Lemma \ref{kel-1} (it can be proved similarly to appropriate statements from \cite{bravyi2}, \cite{bravyi3}).

\begin{lemma}\label{kel-2} Let non-negative numbers $\calT^+$, $\calT^-$ and
$k<0$ be given. For problem \eqref{ke-20} in the space $\spaceD_-$ to be uniquely solvablefor all operators  $T=T^+-T^-$ such that linear positive operators  $T^+$, $T^-:\spaceC\to\spaceL$ satisfy  \eqref{ke-137},
it is necessary and sufficient that the problem
\begin{equation}\label{ke-139}
\left\{
\begin{array}{l}
    \dot x(t)=-\dfrac{k}{t}x(t)+p_1(t)\,x(t_1)+p_2(t)\,x(t_2),\quad t\in[0,1],\\
    x(1)=0,
\end{array}
\right.
\end{equation}
has in the space $\spaceD_-$ only the trivial solution for all $0\le t_1< t_2\le1$ and all functions $p_1$, $p_2\in\spaceL$ such that
\begin{equation}\label{ke-141}
    p^+,p^-\in\spaceL,\quad p^+\ge0,\quad p^-\ge0, \quad \norm{p^{+}}=\calT^{+}, \quad \norm{p^{-}}=\calT^{-},
\end{equation}
and
\begin{equation}\label{ke-140}
\begin{split}
    p_1+p_2=p^+-p^-,\quad     -p^-\le p_i\le p^+,\quad i=1,2.
\end{split}
\end{equation}
\end{lemma}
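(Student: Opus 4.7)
The plan is to adapt the two-point concentration argument underlying Lemma \ref{kel-1} (and the references \cite{bravyi2, bravyi3}) to the right-endpoint boundary condition $x(1)=0$ appropriate for $\spaceD_-$. The reduction will use the maximum and minimum of a continuous nontrivial solution of the homogeneous problem to ``collapse'' the action of a regular operator $T$ to a rank-two operator supported at those extremal points, and conversely.

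\emph{Sufficiency.} Assume \eqref{ke-139} admits only the trivial solution under the constraints \eqref{ke-141}-\eqref{ke-140}, and let $T=T^+-T^-$ satisfy \eqref{ke-137}. By the Fredholm property established in Section \ref{ss-3}, it suffices to rule out the existence of a nontrivial $x\in\spaceD_-$ with $\dot x(t)=-(k/t)x(t)+(Tx)(t)$ and $x(1)=0$. Since $\spaceD_-$ is continuously embedded in $\spaceC$ and $x(0)=0$, the maximum $M=x(\tau_2)$ and the minimum $m=x(\tau_1)$ are attained on $[0,1]$ with $m\le0\le M$, and nontriviality forces $m<M$. Write $x(t)=M\mu(t)+m(1-\mu(t))$ with $\mu(t)=(x(t)-m)/(M-m)\in[0,1]$, and set $p^\pm(t):=(T^\pm\unit)(t)$. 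Linearity of $T^\pm$ gives
\begin{equation*}
(Tx)(t)=a(t)\,x(\tau_2)+b(t)\,x(\tau_1),
\end{equation*}
where $a:=T^+\mu-T^-\mu$ and $b:=p^+-p^--a=T^+(1-\mu)-T^-(1-\mu)$. Positivity of $T^\pm$ implies $0\le T^\pm\mu\le p^\pm$ and $0\le T^\pm(1-\mu)\le p^\pm$, whence $-p^-\le a,b\le p^+$, and positivity also yields $\|p^\pm\|_{\spaceL}=\|T^\pm\|_{\spaceC\to\spaceL}=\calT^\pm$. Relabelling $\{\tau_1,\tau_2\}$ so that $t_1<t_2$ and letting $(p_1,p_2)$ be $(a,b)$ or $(b,a)$ in the corresponding order, we see that $x$ is a nontrivial solution of \eqref{ke-139}, a contradiction.

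\emph{Necessity.} Given admissible data $t_1<t_2$, $p_1,p_2,p^+,p^-$ and a nontrivial $x\in\spaceD_-$ solving \eqref{ke-139}, split each $p_i$ into components under $p^+$ and $p^-$: set $p_1^-:=\max(-p_1,0)$, $p_2^-:=p^--p_1^-$, $p_1^+:=p_1+p_1^-$, and $p_2^+:=p_2+p_2^-$. The inequalities $-p^-\le p_i\le p^+$ together with $p_1+p_2=p^+-p^-$ yield $p_i^\pm\ge0$, $p_i^\pm\le p^\pm$, and $p_1^\pm+p_2^\pm=p^\pm$. Define
\begin{equation*}
(T^\pm y)(t):=p_1^\pm(t)\,y(t_1)+p_2^\pm(t)\,y(t_2),\qquad y\in\spaceC.
\end{equation*}
These are positive linear operators $\spaceC\to\spaceL$ with $\|T^\pm\|_{\spaceC\to\spaceL}=\int_0^1 p^\pm\,ds=\calT^\pm$, and $((T^+-T^-)x)(t)=p_1(t)x(t_1)+p_2(t)x(t_2)$, so $x$ is a nontrivial solution of the homogeneous form of \eqref{ke-20}, which is therefore not uniquely solvable for this choice of $T$.

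The main technical point is the sufficiency direction: one must verify that $a$ and $b$ simultaneously satisfy the two-sided bounds in \eqref{ke-140} and that the extremal values $M,m$ are attained on $[0,1]$ (here the boundary data $x(0)=x(1)=0$ and the embedding $\spaceD_-\hookrightarrow\spaceC$ are essential). Both bounds reduce to the elementary fact that for a positive operator $S:\spaceC\to\spaceL$ and $0\le\mu\le1$ one has $0\le S\mu\le S\unit$; combined with the positivity identity $\|S\|_{\spaceC\to\spaceL}=\|S\unit\|_\spaceL$, this closes the sufficiency argument. The case $t_1<t_2$ versus $t_2<t_1$ is handled by symmetric relabelling, and the case $m=M$ is impossible since $x$ is nontrivial with $x(0)=x(1)=0$.
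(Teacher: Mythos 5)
Your proof is correct and is essentially the argument the paper delegates to \cite{bravyi2}, \cite{bravyi3}: in one direction the reduction of a general regular operator $T=T^+-T^-$ to a rank-two operator concentrated at the extremal points of a hypothetical nontrivial solution (via $\mu=(x-m)/(M-m)$ and the bounds $0\le T^\pm\mu\le T^\pm\unit$), and in the other the reassembly of positive operators $T^\pm$ from an admissible pair $(p_1,p_2)$. Both directions, including the norm identities $\norm{T^\pm}_{\spaceC\to\spaceL}=\norm{T^\pm\unit}_{\spaceL}$ and the use of the Fredholm property from Section \ref{ss-3}, check out.
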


\begin{proof}[Proof of Theorem \ref{ket-3}]
Problem \eqref{ke-139} is equivalent to the equation
\begin{equation*}
    x(t)=-\int_t^1\left(\frac{t}{s}\right)^{|k|}(p_1(s)x(t_1)+p_2(s)x(t_2))\,ds,\quad t\in(0,1],
\end{equation*}
in the space $\spaceC$. This equation has only the trivial solution if and only if
\begin{equation}\label{ke-143}
    \Delta=\left|
           \begin{array}{cc}\dd
           1+\int_{t_1}^{1}\left(\dfrac{{t_1}}{s}\right)^{|k|}p_1(s)\,ds & \dd           \int_{t_1}^{1}\left(\dfrac{{t_1}}{s}\right)^{|k|}p_2(s)\,ds\\
\dd           \int_{t_2}^{1}\left(\dfrac{{t_2}}{s}\right)^{|k|}p_1(s)\,ds & \dd 1+\int_{t_2}^{1}\left(\dfrac{{t_2}}{s}\right)^{|k|}p_2(s)\,ds
           \end{array}
           \right| \ne0.
\end{equation}
For all $p_1$, $p_2$ satisfying  \eqref{ke-141}, \eqref{ke-140}, we have
\begin{equation*}
    \Delta=\left|
           \begin{array}{cc}\dd
           1+\int_{t_1}^{1}\left(\dfrac{{t_1}}{s}\right)^{|k|}p_1(s)\,ds & \dd           1+\int_{t_1}^{1}\left(\dfrac{{t_1}}{s}\right)^{|k|}(p^+(s)-p^-(s))\,ds\\
\dd           \int_{t_2}^{1}\left(\dfrac{{t_2}}{s}\right)^{|k|}p_1(s)\,ds & \dd 1+\int_{t_2}^{1}\left(\dfrac{{t_2}}{s}\right)^{|k|}(p^+(s)-p^-(s))\,ds
           \end{array}
           \right| .
\end{equation*}
Now we have to determine for which $\calT^+$, $\calT^-$ the inequality $\Delta>0$ is fulfilled for all functions $p_1$, $p^+$, $p^-$ satisfying \eqref{ke-141}, \eqref{ke-140} and for all $0\le t_1< t_2\le1$.

If $p_1=0$,
\begin{equation*}
    \Delta=\dd     1+\int_{t_2}^{1}\left(\dfrac{{t_2}}{s}\right)^{|k|}(p^+(s)-p^-(s))\,ds.
\end{equation*}
For fixed $p^+$ and $p^-$ for for sufficient small $1-t_2>0$  all values  $\Delta$ are positive.
Therefore, the condition $\Delta\ne0$ in \eqref{ke-143} can be substituted by $\Delta>0$.

For a fixed  $t_2$, the value of $\Delta$ is minimal if the function $p^+\in\spaceL$ is ``concentrated'' at, and the function $p^-\in\spaceL$ at $s=t_2$. Therefore, for the inequality $\Delta>0$ to follow from \eqref{ke-141} and \eqref{ke-140}, it is necessary that
\begin{equation}\label{ke-146}
    \norm{p^-}=\calT^-\le1.
\end{equation}

We have
\begin{equation*}
\begin{split}
    \Delta=\alpha+\alpha\int_{t_1}^{1}\left(\dfrac{{t_1}}{s}\right)^{|k|}p_1(s)\,ds-
    \beta\int_{t_2}^1\left(\dfrac{{t_2}}{s}\right)^{|k|}p_1(s)\,ds=\\
    =\alpha+\int_{t_2}^{1}\left(\alpha\left(\dfrac{{t_1}}{s}\right)^{|k|}-
    \beta\left(\dfrac{{t_2}}{s}\right)^{|k|}\right)p_1(s)\,ds
    +\alpha\int_{t_1}^{t_2}\left(\dfrac{{t_2}}{s}\right)^{|k|} p_1(s)\,ds,
\end{split}
\end{equation*}
and
\begin{equation*}
\begin{split}
    \alpha\equiv 1+\int_{t_2}^{1}\left(\dfrac{{t_2}}{s}\right)^{|k|}(p^+(s)-p^-(s))\,ds>0,\\
    \beta\equiv 1+\int_{t_1}^{1}\left(\dfrac{{t_1}}{s}\right)^{|k|}(p^+(s)-p^-(s))\,ds>0,
\end{split}
\end{equation*}
since \eqref{ke-146}. Hence, for fixed $t_1<t_2$, $p^+$, $p^-$, the functional $\Delta$ takes its minimal value if
$$p_1(t)=-p^-(t),\quad t\in[t_1,t_2],$$
and
$$p_1(t)=p^+(t),\quad t\in[t_2,1]\ \text{ or }
p_1(t)=-p^-(t),\quad t\in[t_2,1].$$
If
$$p_1(t)=-p^-(t),\quad t\in[t_1,1],$$
then it is easy to see that $\Delta>0$  for any other parameters.
Consider the rest case:
\begin{equation*}
    p_1(t)=
    \left\{
    \begin{array}{ll}
    -p^-(t),&\quad t\in[t_1,t_2],\\
     p^+(t),&\quad t\in(t_2,1].
\end{array}
\right.
\end{equation*}
Then
\begin{equation*}
    \Delta=\left|
           \begin{array}{cc}
\begin{aligned}
\dd           1+\int_{t_2}^{1}\left(\dfrac{{t_1}}{s}\right)^{|k|}p^+(s)\,ds- \\
-             \int_{t_1}^{t_2}\left(\dfrac{{t_1}}{s}\right)^{|k|}p^-(s)\,ds
 \end{aligned}
&
  \begin{aligned}
   \dd \int_{t_1}^{t_2}\left(\dfrac{{t_1}}{s}\right)^{|k|}p^+(s)\,ds-\\
   -\int_{t_2}^{1}\left(\dfrac{{t_1}}{s}\right)^{|k|}p^-(s)\,ds
  \end{aligned}\\[40pt]
\dd
           \int_{t_2}^{1}\left(\dfrac{{t_2}}{s}\right)^{|k|}p^+(s)\,ds & \dd           1-\int_{t_2}^{1}\left(\dfrac{{t_2}}{s}\right)^{|k|}p^-(s)\,ds
 \end{array}
\right|.
\end{equation*}
Now $\Delta$ takes its minimum if the functions
$p^+$ and $p^-$ are ``concentrated'' at $t=t_1$ on the interval $[t_1,t_2]$, and at the points  $t=t_2$ or
$t=1$ on the interval $(t_2,1]$.

Using the notation
\begin{equation*}
\begin{split}
\calT^+_1=\int_{t_1}^{t_2}p^+(s)\,ds,\quad \calT^-_1=\int_{t_1}^{t_2}p^-(s)\,ds,\\
\calT^+_2=\int_{t_2}^{1}p^+(s)\,ds,\quad \calT^-_2=\int_{t_2}^{1}p^-(s)\,ds,
\end{split}
\end{equation*}
and taking into account that $\Delta$ can not take the specified minimum for integrable functions $p^+$ and $p^-$, we get that
\begin{equation}\label{ke-152}
    \Delta>
    \left|
           \begin{array}{cc}1-\calT^-_1+\left(\dfrac{t_1}{{t_2}}\right)^{|k|}\calT^+_2 K^+ & \calT^+_1-\left(\dfrac{t_1}{{t_2}}\right)^{|k|}\calT^-_2 K^- \\[20pt]           \calT^+_2 K^+&1-\calT^-_2 K^- \end{array}
\right|\equiv \Delta_1.
\end{equation}
where $K^+=1$ or $K^+=t_2^{|k|}$, $K^-=1$ or $K^-=t_2^{|k|}$, and $\Delta$ can be arbitrary closely to the right-hand side of \eqref{ke-152}. It can easily be checked that for fixed rest parameters, $\Delta_1$ takes its minimal value at  $t_1=0$.
In this case
\begin{equation*}
\Delta_1=(1-\calT_2^-K^-)(1-\calT_1^-)-\calT_1^+\,\calT_2^+\,K^+,
\end{equation*}
and $\Delta_1$ takes its minimal value, in particular, for
 $$K^+=1,\quad\calT^-_1=0,\quad \calT^-_2=\calT^-,\quad K^+=1,\quad \calT^+_1=\calT^+_2=\calT^+/2.$$
Then
$$\Delta_1=1-\calT^--(\calT^+)^2/4,$$
hence, $\Delta>0$ for all admissible parameters if and only if inequalities \eqref{ke-138} hold.
\end{proof}

For comparison, we give a result on the solvability the Cauchy problem for a non-singular equation.
\begin{theorem}[\cite{Hakl}]\label{ket-0} Let non-negative numbers $\calT^+$, $\calT^-$ be given. For the Cauchy problem in the space $\spaceAC$
\begin{equation*}
\left\{
\begin{array}{l}
    \dot x(t)=(Tx)(t)+f(t),\quad t\in[0,1],\\
    x(0)=c,
\end{array}
\right.
\end{equation*}
to be uniquely solvable for all operators $T=T^+-T^-$ such that linear positive operators  $T^+$, $T^-:\spaceC\to\spaceL$ satisfy the equalities
\begin{equation*}
        \norm{T^+}_{\spaceC\to\spaceL}=\calT^+,\quad
        \norm{T^-}_{\spaceC\to\spaceL}=\calT^-,
\end{equation*}
it is necessary and sufficient that
\begin{equation*}
 \calT^+<1, \quad   \calT^-<1+ 2\sqrt{1-\calT^+}.
\end{equation*}
\end{theorem}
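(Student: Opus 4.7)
The plan is to follow the same three-step strategy used in Theorems \ref{ket-1} and \ref{ket-3}: (i) reduce unique solvability for all regular $T$ with prescribed norms $\calT^\pm$ to a two-point scalar model problem via a concentration lemma; (ii) convert the two-point model to an algebraic condition on a $2\times 2$ determinant; (iii) optimize that determinant over all admissible parameters to extract the sharp inequalities.

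First I would prove an analog of Lemma \ref{kel-1}, replacing the singular factor $k/t$ by zero and the initial condition $x(0)=0$ by $x(0)=c$, reducing matters to showing that the homogeneous Cauchy problem
\begin{equation*}
\dot x(t) = p_1(t)\,x(t_1)+p_2(t)\,x(t_2),\quad x(0)=0,
\end{equation*}
has only the trivial solution for all $0\le t_1<t_2\le 1$ and all $p_1,p_2\in\spaceL$ with $p_1+p_2 = p^+-p^-$, $-p^-\le p_i\le p^+$, and $\norm{p^\pm}=\calT^\pm$. Integrating and evaluating at $t_1,t_2$ yields a $2\times 2$ linear system in $(x(t_1),x(t_2))$ whose determinant
\begin{equation*}
\Delta = (1-P_1(t_1))(1-P_2(t_2))-P_2(t_1)\,P_1(t_2),\qquad P_i(t)=\int_0^t p_i(s)\,ds,
\end{equation*}
must be non-zero; continuity together with positivity for small $t_2$ shows that one in fact needs $\Delta>0$ uniformly.

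Necessity of $\calT^+<1$ comes immediately from $p_1\equiv 0$ with $p^+$ concentrated on $[0,t_2]$, giving $\Delta=1-\int_0^{t_2}p^+$, which can approach $1-\calT^+$. For the second bound, the decisive observation is that $\Delta$ is bi-affine in $(P_1(t_1),P_1(t_2))$, so its minimum over feasible $p_1$ is attained by pointwise extremal choices $p_1\in\{p^+,-p^-\}$ on each of $[0,t_1]$ and $(t_1,t_2]$. Inspection of the four cases shows that only the configuration $p_1=p^+$ on $[0,t_1]$, $p_1=-p^-$ on $(t_1,t_2]$ is binding; in that case a short computation collapses the determinant to
\begin{equation*}
\Delta = (1-\alpha)(1-\beta)+\gamma(1-\delta),
\end{equation*}
with $\alpha,\beta$ the integrals of $p^+$ and $\gamma,\delta$ the integrals of $p^-$ on the two subintervals, subject to $\alpha+\beta\le\calT^+$ and $\gamma+\delta\le\calT^-$. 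A one-variable minimization then yields $\inf\Delta = 1-\calT^+-(\calT^--1)^2/4$ when $\calT^-\ge 1$ (and $\inf\Delta = 1-\calT^+$ otherwise), so $\Delta>0$ for all admissible parameters is equivalent to $\calT^-<1+2\sqrt{1-\calT^+}$.

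The main obstacle will be to identify the correct binding configuration and to understand why the ``$+1$'' appears, in contrast to Theorem \ref{ket-1}. The explanation is structural: in the singular case, the weight $(s/t_2)^k$ vanishes at $s=0$, so mass placed by $p^-$ near the left endpoint contributes nothing to $\Delta$; in the non-singular case the kernel assigns weight $1$ throughout $[0,t_2]$, and this extra contribution shifts the optimal location of $p^-$ so that the critical balance point moves from $\calT^-/2$ to $(\calT^-+1)/2$, producing the ``$+1$'' in the final bound. Sufficiency is then clinched by combining the uniform positivity of $\Delta$ with the standard Fredholm property of the Cauchy problem $\dot x = Tx+f$, $x(0)=c$ in $\spaceAC$.
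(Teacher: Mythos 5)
Your proposal is sound, but note that the paper itself offers no proof of Theorem~\ref{ket-0}: it is quoted from \cite{Hakl} purely for comparison with Theorems~\ref{ket-1} and~\ref{ket-3}. What you have done is transplant the determinant method that the paper uses for the singular theorems to the non-singular kernel, and this does work and reproduces the known sharp constants. Your key identity checks out: with $p_1=p^+$ on $[0,t_1]$ and $p_1=-p^-$ on $(t_1,t_2]$ one gets $P_1(t_1)=\alpha$, $P_1(t_2)=\alpha-\delta$, $P_2(t_1)=-\gamma$, $P_2(t_2)=\beta-\gamma$, whence $\Delta=(1-\alpha)(1-\beta+\gamma)+\gamma(\alpha-\delta)=(1-\alpha)(1-\beta)+\gamma(1-\delta)$, and the one-variable minimization over $\gamma+\delta=\calT^-$ gives $-({\calT^-}-1)^2/4$ at $\gamma=({\calT^-}-1)/2$, yielding exactly $\calT^-<1+2\sqrt{1-\calT^+}$. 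Your structural explanation of the ``$+1$'' is also the right one: the singular kernel $(s/t_2)^k$ annihilates mass of $p^-$ near $s=0$, while the constant kernel retains it, producing the extra term $+\gamma$ in $\gamma(1-\delta)=-\gamma\delta+\gamma$. One point deserves to be made explicit rather than left implicit: in the non-singular case the extremal configurations are realized by genuine $\spaceL$-functions (only supports matter, no point-mass concentration is needed), so the infimum of $\Delta$ is attained; this is precisely why the conditions here are strict inequalities, whereas in Theorems~\ref{ket-1} and~\ref{ket-3} the infimum $\Delta_1$ is approached but never attained and the conditions are non-strict. The remaining debts in your outline --- the reduction lemma analogous to Lemma~\ref{kel-1} and the Fredholm property of the non-singular Cauchy problem --- are exactly the debts the paper itself incurs (citing \cite{bravyi2,bravyi3} and \cite{Bravyi1}), so they are acceptable at this level of detail. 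Finally, be aware that the original proof in \cite{Hakl} proceeds quite differently, by direct integral and sign estimates on a hypothetical nontrivial solution of the homogeneous problem (locating its extrema and deriving contradictory inequalities) together with explicit counterexamples for the necessity part; your route via the two-point reduction is more uniform with the rest of this paper and arguably more mechanical, at the cost of relying on the unproved reduction lemma.
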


\section{A more general ``model'' equation}\label{p-4}\label{ss-5}

The assertions on the solvability of singular equations in the space $\spaceD_+$ and boundary value problems in the space $\spaceD_-$ admit the following natural generalizations.

Let $p:(0,1]\to\spaceR$ be a positive function and its restriction on each interval  $[\varepsilon,1]$ integrable: $p\in\spaceL[\varepsilon,1]$ for every $\varepsilon\in(0,1)$. Suppose
\begin{equation}\label{ke-54}
\lim\limits_{\varepsilon\to0+}\int_\varepsilon^1 p(t)\,dt=\infty.
\end{equation}
For example, $$p(t)=\dfrac{1}{t^\mu},\quad t\in(0,1],\quad \text{ where }\mu\ge1.$$

As a ``model'' equation, we take the ordinary differential equation
\begin{equation}\label{ke-55}
    \dot x(t)=-k\,p(t)x(t)+f(t),\quad t\in(0,1],\quad \text{ where }k\ne0.
\end{equation}

As previously for \eqref{ke-1},
a locally absolutely continuous in $(0,1]$ function $x:(0,1]\to\spaceR$ (that is $x\in\spaceAC[\varepsilon,1]$ for each $\varepsilon\in(0,1)$)
is called a solution of equation \eqref{ke-55} if $x$ satisfies \eqref{ke-55} almost everywhere. Since equation \eqref{ke-55} on each interval $[\varepsilon,1]$ ($\varepsilon\in(0,1)$) does not have singularities, then any its solution has a representation
\begin{equation}\label{ke-56}
    x(t)=x_1(t)x(1)-\int_t^1 \frac{x_1(t)}{x_1(s)}f(s)\,ds=
    x_1(t)\bigl(x(1)-\int_t^1\frac{f(s)}{x_1(s)}\,ds\bigr),\quad t\in(0,1],
\end{equation}
where
\begin{equation}\label{ke-57}
    x_1(t)=e^{k\int_t^1p(s)\,ds},\quad t\in(0,1].
\end{equation}
Obviously, that for $k>0$ the function $x_1$ decreases on  $(0,1]$ and $\lim\limits_{t\to0+}x_1(t)=\infty$;
for $k<0$ the function $x_1$ increases on $(0,1]$ and $\lim\limits_{t\to0+}x_1(t)=0$.
Denote by $\spaceD$ the set of all solutions to \eqref{ke-55} for all $f\in\spaceL[0,1]$.

Representation \eqref{ke-56} sets a one-to-one correspondence between $\spaceD$ and $\spaceL\times\spaceR$, therefore, $\spaceD$ is a Banach space with respect to the norm
\begin{equation}\label{ke-300}
    \norm{x}_{\spaceD}=|x(1)|+\int_0^1|\dot x(s)+k\,p(s)\,x(s)|\,ds.
\end{equation}

Let $k<0$. Denote $\spaceD_-=\spaceD$.
It is easy to prove that the embedding $x\to x$  from $\spaceD_-$ into $\spaceAC$ is bounded.

Indeed, let $x\in\spaceD_-$ be a solution to \eqref{ke-55}. Then
\begin{equation*}
    \norm{x}_{\spaceD_-}=|x(1)|+\int_0^1|f(s)|\,ds,
\end{equation*}
\begin{equation*}
\begin{split}
    \norm{x}_{\spaceAC}=|x(1)|+\int_0^1|\dot x(s)|\,ds=|x(1)|+\int_0^1|f(s)-k\,p(s)x(s)|\,ds\le \\
    |x(1)|+\int_0^1|f(s)|\,ds+\int_0^1|k|\,p(s)|x(s)|\,ds\le |x(1)|+\\
    \int_0^1|f(s)|\,ds+\int_0^1|k|\,p(s)\,x_1(s)\,ds\,|x(1)|+|k|\int_0^1 p(t)x_1(t)\int_t^1 \frac{|f(s)|}{x_1(s)}\,ds\,dt= \\
    |x(1)|+    \int_0^1|f(s)|\,ds+|x(1)|+|k|\int_0^1 p(t)x_1(t)\int_t^1 \frac{|f(s)|}{x_1(s)}\,ds\,dt.
\end{split}
\end{equation*}
Changing the order of integration in the last integral (it is possible by the Fubini theorem, since all integrands are non-negative), we get:
\begin{equation*}
\begin{split}
|k|\int_0^1 p(t)x_1(t)\int_t^1 \frac{|f(s)|}{x_1(s)}\,ds\,dt=\\
=\int_0^1 \int_0^s |k| p(t)x_1(t)\, dt \frac{|f(s)|}{x_1(s)}\,ds=\int_0^1(x_1(s)-x_1(0))\frac{|f(s)|}{x_1(s)}\,ds=\int_0^1|f(s)|\,ds.
\end{split}
\end{equation*}
Therefore,
\begin{equation}\label{ke-60000}
\begin{split}
    \norm{x}_{\spaceAC}\le 2|x(1)|+2\int_0^1|f(s)|\,ds\le 2 \norm{x}_{\spaceD_-},
\end{split}
\end{equation}
and for every $x\in\spaceD_-$ we have $\dot x\in\spaceL$. So, there exists a finite limit $x(0+)$. Extend elements of $\spaceD_-$ at $t=0$ continuously. For every $x\in\spaceD_-$ there exists $f\in\spaceL$ such that
$${k}p(t)x(t)=f(t)-\dot x(t),\quad t\in[0,1],$$
where the right-hand side is integrable on $[0,1]$. So,
$$\int_0^1 p(s)|x(t)|\,dt< +\infty,$$
which implies for the continuous function $x$ that $x(0)=0$. Therefore,
$$\lim\limits_{t\to0} x(t)=0\text{ for all }x\in\spaceD_-.$$
Inequality \eqref{ke-60000} means that the space $\spaceD_-$ is continuously embedded into $\spaceAC$.

For $k>0$, it follows from \eqref{ke-56} that a solution of \eqref{ke-55} can have a finite limit at $t=0+$ if and only if
\begin{equation}\label{ke-8-new}
    x(1)=\int_0^1\frac{f(s)}{x_1(s)}\,ds.
\end{equation}
In this case a solution has the representation
\begin{equation}\label{ke-58}
    x(t)=\int_0^t\frac{x_1(t)}{x_1(s)}f(s)\,ds,\ t\in(0,1],
\end{equation}
and, obviously, has the zero limit at  $t=0+$. Indeed, it follows from \eqref{ke-58} that
$$|x(t)|\le \int_0^t\frac{x_1(t)}{x_1(s)}|f(s)|\,ds\le \int_0^t|f(s)|\,ds,\quad t\in(0,1],$$
hence $\lim\limits_{t\to0+} x(t) =0$ by the absolutely continuousness of the Lebesgue integral.

For $k>0$, denote by $\spaceD_+$ the space of all solutions of \eqref{ke-55} satisfying condition \eqref{ke-8-new} and extended by zero at $t=0$. It is a Banach space equipped with the norm
\begin{equation}\label{ke-59}
    \norm{x}_{\spaceD_+}=|x(1)|+\int_0^1|\dot x(s)+{k}\,p(s)x(s)|\,ds.
\end{equation}
The space $\spaceD_+$ and  $\spaceL$ are isomorphic. A one-to-one correspondence between $\spaceD_+$ and $\spaceL$ is set by equality \eqref{ke-58}.
It is easy to show that $\spaceD_+$ is embedded into $\spaceAC$ continuously.

Each of the sets $\spaceD_-$, $\spaceD_+$ is a proper subset of all functions from $\spaceAC$
satisfying the condition $x(0)=0$. The sets $\spaceD_-$ and $\spaceD_+$ do not depend on the number $k\ne0$ and moreover, they coincide:
$$\{x|x\in\spaceD_-\}=\{x|x\in\spaceD_+\}.$$
We show that for any $k\ne0$ such a set coincides with the set $\w\spaceAC_0$ of all absolutely continuous functions $x:[0,1]\to\spaceR$ such that $x(0)=0$ and the function $p(t)\,x(t)$, $t\in(0,1]$, is integrable. If $x\in\spaceD_-$ or $x\in\spaceD_+$ for some $k\ne0$, then $x(0)=0$, $x\in\spaceAC$, and for some $f\in\spaceL$ the equality
$$p(t)\,x(t)=\frac{1}{k}\left(f(t)-\dot x(t) \right), \quad t\in(0,1],$$
holds. This implies that $x\in\w\spaceAC_0$. Conversely, if $x\in\w\spaceAC_0$, then for any  $k\ne0$ the function
$$f(t)=\dot x(t)+k\,p(t)\,x(t), \quad t\in(0,1],$$
is integrable, therefore, $x$ belongs to every sets $\spaceD_-$ and $\spaceD_+$.

For every positive $k$ the space $\spaceD_+$ is isomorphic to the space $\spaceL$, therefore
all norms \eqref{ke-59} defined on the set  $\w\spaceAC_0$ for different $k>0$ are equivalent. Similarly for every negative $k$ the space $\spaceD_-$ is isomorphic to the space  $\spaceL\times\spaceR$.
So, all norms \eqref{ke-300} defined on the set  $\w\spaceAC_0$ for different $k<0$ are equivalent. Since there are no linear bounded invertible maps between $\spaceL\times\spaceR$ and $\spaceL$, then norms in the spaces $\spaceD_+$ and $\spaceD_-$ are not equivalent.

From \eqref{ke-56} it follows that for $k<0$ in the space $\spaceD_-$ the boundary value problem
\begin{equation}\label{ke-315}
\left\{
\begin{array}{l}
    \dot x(t)=-k\,p(t)\,x(t)+f(t),\quad t\in[0,1],\\
    x(1)=c,
\end{array}
\right.
\end{equation}
is uniquely solvable (that is it has a unique solution $x\in\spaceD_-$, which is defined by \eqref{ke-56}, for all  $f\in\spaceL$, $c\in\spaceR$). Clearly, that for $k=0$  problem \eqref{ke-315} is uniquely solvable in the space $\spaceAC$, but not in the space $\spaceD_-$.

From  \eqref{ke-58} it follows that
for $k>0$  in the space $\spaceD_+$ the equation
\begin{equation}\label{ke-316}
    \dot x(t)=-k\,p(t)\,x(t)+f(t),\quad t\in[0,1],\\
\end{equation}
is uniquely solvable (it has a unique solution $x\in\spaceD_+$ defined by \eqref{ke-58} for all $f\in\spaceL$). Note, equation \eqref{ke-316} in the space  $\spaceD_+$ can be written in the equivalent form of the Cauchy problem
\begin{equation}\label{ke-317}
\left\{
\begin{array}{l}
    \dot x(t)=-k\,p(t)\,x(t)+f(t),\quad t\in[0,1],\\
    x(0)=0.
\end{array}
\right.
\end{equation}
Now and for $k=0$, problem \eqref{ke-317} (for the non-singular equation) is uniquely solvable in the space $\spaceAC$.

Let $\ell:\spaceAC\to\spaceR$ be a linear bounded functional,
$T:\spaceC[0,1]\to \spaceL[0,1]$ a linear bounded operator.
Since $\spaceD_-$ is isomorphic to  $\spaceL\times\spaceR$ and continuously embedded into  $\spaceAC$, then for $k<0$ the boundary value problem
\begin{equation}\label{ke-3319}
\left\{
\begin{array}{l}
    \dot x(t)=-k\,p(t)\,x(t)+(Tx)(t)+f(t),\quad t\in[0,1],\\
    \ell x=c,
\end{array}
\right.
\end{equation}
has the Fredholm property The proof is similar to the proof for problem \eqref{ke-19}. In particular, the problem
\begin{equation}\label{ke-3320}
\left\{
\begin{array}{l}
    \dot x(t)=-k\,p(t)\,x(t)+(Tx)(t)+f(t),\quad t\in[0,1],\\
    x(1)=c,
\end{array}
\right.
\end{equation}
has the Fredholm property.

Boundary value problems with singularities as in \eqref{ke-3319} and \eqref{ke-3320} were considered in \cite{LabShind}.

As for equation \eqref{ke-24}, using the continuity of the embedding  $\spaceD_+$ for
 $k>0$ into $\spaceAC$, we prove that the equation in the space $\spaceD_+$
\begin{equation}\label{ke-60}
    \dot x(t)=-k\,p(t)x(t)+(Tx)(t)+f(t),\quad t\in[0,1],
\end{equation}
which is equivalent to the Cauchy problem
\begin{equation}\label{ke-61}
\left\{
\begin{array}{l}
    \dot x(t)=-k\,p(t)x(t)+(Tx)(t)+f(t),\quad t\in[0,1],\\
    x(0)=0,
\end{array}
\right.
\end{equation}
has the Fredholm property for every linear bounded operator $T:\spaceC\to\spaceL$. Therefore, equation \eqref{ke-60} is uniquely solvable if and only if
the homogeneous equation
\begin{equation*}
    \dot x(t)=-k\,p(t)x(t)+(Tx)(t),\quad t\in[0,1],
\end{equation*}
or the Cauchy problem
\begin{equation}\label{ke-63}
\left\{
\begin{array}{l}
    \dot x(t)=-k\,p(t)x(t)+(Tx)(t),\quad t\in[0,1],\\
    x(0)=0,
\end{array}
\right.
\end{equation}
has only the trivial solution in the space $\spaceD_+$.

In a regular operator $T:\spaceC\to\spaceL$ is a Volterra operator, it is easy to show that problem \eqref{ke-63}, which is equivalent to the equation
\begin{equation*}
    x(t)=\int_0^t\frac{x_1(t)}{x_1(s)}(Tx)(s)\,ds,\ t\in(0,1],
\end{equation*}
has only the trivial solution, therefore problem  \eqref{ke-61} for $k>0$ is uniquely solvable in $\spaceD_+$.

If a regular operator $T:\spaceC\to\spaceL$ is a Volterra operator with respect to the point $t=1$, then for $k<0$ problem \eqref{ke-3320} in the space $\spaceD_+$ is uniquely solvable (also as well as problem \eqref{ke-20}).

Introduce the operators $\Lambda_-$, $\Lambda_+:\spaceL\to\spaceC$ by the equalities
\begin{equation*}
(\Lambda_-f)(t)=-\int_t^1\frac{x_1(t)}{x_1(s)}f(s)\,ds,\quad t\in[0,1],
\end{equation*}
\begin{equation*}
(\Lambda_+f)(t)=\int_0^t\frac{x_1(t)}{x_1(s)} f(s)\,ds,\ t\in(0,1].
\end{equation*}

For a linear bounded operator
$T:\spaceC\to\spaceL$, the condition on the spectral radius
\begin{equation*}
\rho(\Lambda_-T)<1
\end{equation*}
guarantees the unique solvability of problem \eqref{ke-3320}, and the condition
\begin{equation*}
\rho(\Lambda_+T)<1
\end{equation*}
guarantees the unique solvability of problem \eqref{ke-61}. In particular, if
\begin{equation}\label{ke-3336}
    \norm{T}_{\spaceC\to\spaceL}\le1,
\end{equation}
then problems \eqref{ke-3320}, \eqref{ke-61} are uniquely solvable. Indeed, in this case the norms of operators $\Lambda_+T$ and $\Lambda_-T$ are less than unit, so the equations in the space $\spaceC$ that are equivalent to the corresponding boundary value problems have a unique solution.

As well as for the ``model'' equation \eqref{ke-1}, if we know the norm of the positive and negative parts  $T^+$, $T^-$ of a regular operator $T$, then the conditions of the solvability for \eqref{ke-3336} can be weaken. The assertions of Theorems \ref{ket-1} and \ref{ket-3} can be transferred on the more general problems \eqref{ke-61} and \eqref{ke-3320} without changing.

\begin{theorem}\label{ket-2} Let non-negative numbers $\calT^+$, $\calT^-$ and $k>0$ be given. For problem \eqref{ke-61} in the space $\spaceD_+$ to be uniquely solvable for all operators $T=T^+-T^-$ such that linear positive operators $T^+$, $T^-:\spaceC\to\spaceL$ satisfy the equalities
\begin{equation}\label{ke-65}
        \norm{T^+}_{\spaceC\to\spaceL}=\calT^+,\quad
        \norm{T^-}_{\spaceC\to\spaceL}=\calT^-,
\end{equation}
it is necessary and sufficient to have
\begin{equation*}
 \calT^+\le1, \quad   \calT^-\le 2\,\sqrt{1-\calT^+}.
\end{equation*}
\end{theorem}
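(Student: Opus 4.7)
The plan is to reproduce the argument of Theorem \ref{ket-1} verbatim, replacing the kernel $(s/t)^k$ by the general kernel $x_1(t)/x_1(s)$, where $x_1(t)=e^{k\int_t^1 p(s)\,ds}$ from \eqref{ke-57}. The first step is to invoke the direct analog of Lemma \ref{kel-1} (provable by the same schema as in \cite{bravyi2,bravyi3}), which reduces unique solvability of \eqref{ke-61} for all admissible regular $T$ to a trivial-solution requirement for the two-atom problem
\begin{equation*}
    \dot x(t)=-k\,p(t)\,x(t)+p_1(t)\,x(t_1)+p_2(t)\,x(t_2),\quad x(0)=0,
\end{equation*}
over all $0\le t_1<t_2\le 1$ and all admissible decompositions $p_1+p_2=p^+-p^-$, $-p^-\le p_i\le p^+$, $\norm{p^\pm}=\calT^\pm$.

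Using the representation \eqref{ke-58} applied to this two-atom right-hand side, the homogeneous equation becomes $x(t)=\int_0^t(x_1(t)/x_1(s))(p_1(s)x(t_1)+p_2(s)x(t_2))\,ds$, and the trivial-solution property is equivalent to non-vanishing of the corresponding $2\times 2$ determinant $\Delta$ with entries built from the integrals $\int_0^{t_i}(x_1(t_i)/x_1(s))p_j(s)\,ds$. Letting $t_2\to 0+$ converts ``$\Delta\ne 0$'' into the stronger ``$\Delta>0$'' requirement, and the substitution $p_1\equiv 0$ combined with concentration of $p^+$ near the right endpoint of $[0,t_2]$ (where $1/x_1(s)$ is largest) forces the necessary condition $\calT^+\le 1$.

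The worst-case analysis then proceeds exactly as in Theorem \ref{ket-1}: for fixed $t_1<t_2$ and $p^\pm$, the minimizing $p_1$ satisfies $p_1=-p^-$ on $[t_1,t_2]$, while on $[0,t_1]$ one compares the two extreme choices $p_1=p^+$ and $p_1=-p^-$; then $p^\pm$ are ``concentrated'' at the appropriate endpoints of the sub-intervals, and $\Delta$ is bounded below by an explicit quantity $\Delta_1$ depending only on the four sub-integrals $\calT_i^\pm=\int_{\cdot}^{\cdot} p^\pm$ and on boundary values of $x_1(t_i)/x_1(t_j)$. Minimizing $\Delta_1$ over $t_1\in[0,t_2]$ and over the splittings $\calT^\pm=\calT_1^\pm+\calT_2^\pm$, one recovers $\Delta_1=1-\calT^+-(\calT^-)^2/4$, so $\Delta>0$ for every admissible configuration if and only if $\calT^+\le 1$ and $\calT^-\le 2\sqrt{1-\calT^+}$.

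The observation that makes the transfer work — and the one step I would verify carefully — is that the only property of the kernel $(s/t)^k$ actually used in Theorem \ref{ket-1} is the strict monotonicity of $1/x_1(s)$ in $s$: this governs both the direction of the concentration argument and the bound $x_1(t_i)/x_1(s)\le 1$ for $s\le t_i$. For $k>0$, the function $x_1$ defined by \eqref{ke-57} is strictly decreasing with $x_1(1)=1$ and, by assumption \eqref{ke-54}, $x_1(0+)=\infty$, so the required monotonicity and boundary behaviour both hold. Beyond this observation the algebraic computation of the extremal $\Delta_1$ is identical to that in Theorem \ref{ket-1}, so no new analytic difficulty arises; the only obstacle is notational, namely carrying the ratios $x_1(t_i)/x_1(s)$ through the calculation in place of $(s/t_i)^k$.
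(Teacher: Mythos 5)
Your proposal is correct and coincides with the paper's own treatment: the paper gives no separate proof of Theorem \ref{ket-2}, stating only that the assertions of Theorems \ref{ket-1} and \ref{ket-3} ``can be transferred'' to problems \eqref{ke-61} and \eqref{ke-3320} without change, which is precisely the kernel-substitution argument you carry out. Your explicit identification of the properties actually used --- that $x_1(t)/x_1(s)=e^{-k\int_s^t p(\tau)\,d\tau}$ is increasing in $s$ on $(0,t]$, equals $1$ at $s=t$, and tends to $0$ as $s\to0+$ by \eqref{ke-54} --- is exactly what justifies the transfer and is, if anything, more detailed than the paper.
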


\begin{theorem}\label{ket-4} Let non-negative numbers $\calT^+$, $\calT^-$ and $k<0$ be given.
For the problem
\begin{equation*}
\left\{
\begin{array}{l}
    \dot x(t)=-k\,p(t)x(t)+(Tx)(t),\quad t\in[0,1],\\
    x(1)=c,
\end{array}
\right.
\end{equation*}
in the space  $\spaceD_-$
to be uniquely solvable for all operators $T=T^+-T^-$ such that linear positive operators $T^+$, $T^-:\spaceC\to\spaceL$ satisfy \eqref{ke-65},
it is necessary and sufficient to have
\begin{equation*}
 \calT^-\le1, \quad   \calT^+\le 2\,\sqrt{1-\calT^-}.
\end{equation*}
\end{theorem}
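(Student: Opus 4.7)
The plan is to run the proof of Theorem \ref{ket-3} essentially verbatim, with the kernel $(t_1/s)^{|k|}$, $(t_2/s)^{|k|}$ replaced by $x_1(t_1)/x_1(s)$, $x_1(t_2)/x_1(s)$, where $x_1(t)=\exp\bigl(k\int_t^1 p(u)\,du\bigr)$ is the fundamental solution from \eqref{ke-57}. For $k<0$ and $p>0$ the ratio $x_1(t)/x_1(s)$ equals $\exp\bigl(k\int_t^s p(u)\,du\bigr)$, which lies in $(0,1]$ for $t\le s$, is monotone in each variable, and, crucially, tends to $0$ as $t\to 0+$ for every fixed $s>0$ thanks to the divergence condition \eqref{ke-54}. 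These are exactly the structural properties of the power kernel that were used in the proof of Theorem \ref{ket-3}.

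First I would state (and apply) an analog of Lemma \ref{kel-2} in which the two-point auxiliary problem \eqref{ke-139} has $-k/t$ replaced by $-k\,p(t)$; the reduction to such a two-point problem is purely functional-analytic (extreme points of the set of regular operators with prescribed norms of positive and negative parts) and does not depend on the form of the model equation, so the proof from \cite{bravyi2}, \cite{bravyi3} carries over. Then, using representation \eqref{ke-56} for solutions in $\spaceD_-$, the two-point problem
\begin{equation*}
\dot x(t)=-k\,p(t)x(t)+p_1(t)\,x(t_1)+p_2(t)\,x(t_2),\quad x(1)=0,
\end{equation*}
is equivalent to an integral equation in $\spaceC$, and its unique solvability is equivalent to the non-vanishing of the determinant
\begin{equation*}
\Delta=\left|
\begin{array}{cc}
1+\int_{t_1}^{1}\frac{x_1(t_1)}{x_1(s)}p_1(s)\,ds & \int_{t_1}^{1}\frac{x_1(t_1)}{x_1(s)}p_2(s)\,ds \\[4pt]
\int_{t_2}^{1}\frac{x_1(t_2)}{x_1(s)}p_1(s)\,ds & 1+\int_{t_2}^{1}\frac{x_1(t_2)}{x_1(s)}p_2(s)\,ds
\end{array}\right|,
\end{equation*}
which has exactly the structure of \eqref{ke-143}.

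Next, I would repeat the concentration/extremization argument of Theorem \ref{ket-3} step by step. Taking $p_1=0$ and letting $p^-$ concentrate at $t_2$ with $t_2\to 1$ forces $\calT^-\le 1$. With this in hand, the coefficients $\alpha,\beta$ are positive, so for fixed $t_1<t_2$ and fixed $p^\pm$ the minimum over admissible $p_1$ is attained on the extreme choices $p_1=-p^-$ on $[t_1,t_2]$ together with $p_1=\pm p^-$ or $p^+$ on $[t_2,1]$, and the case $p_1\equiv-p^-$ on $[t_1,1]$ is handled directly. In the remaining case one concentrates $p^\pm$ at the endpoints $t_1,t_2,1$, leading to the bound $\Delta>\Delta_1$ with $\Delta_1$ of the same form as in \eqref{ke-152}, where now $K^+,K^-\in\{1, x_1(t_2)^{-1}\cdot x_1(t_2)\}$ — that is, $K=1$ or $K=x_1(t_1)/x_1(t_2)$, a number in $(0,1]$ that can be pushed down to $0$ via $t_1\to 0+$ by virtue of \eqref{ke-54}. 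Setting $t_1=0$ (the minimizer), optimizing over $\calT^\pm_1,\calT^\pm_2$, and taking $K^+=1$, $\calT^-_1=0$, $\calT^-_2=\calT^-$, $\calT^+_1=\calT^+_2=\calT^+/2$ yields
\begin{equation*}
\Delta_1=1-\calT^-- (\calT^+)^2/4,
\end{equation*}
so the sharp condition is $1-\calT^-- (\calT^+)^2/4\ge 0$, equivalent to the inequalities claimed in the theorem.

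The hard part is not algebraic — the determinant minimization is identical to that of Theorem \ref{ket-3} — but the verification that all the limit configurations used to extremize $\Delta$ (in particular the configuration $K=x_1(t_1)/x_1(t_2)\to 0$ as $t_1\to 0$) are actually approachable in the space $\spaceD_-$ with admissible $p^\pm\in\spaceL$. This is exactly where assumption \eqref{ke-54} enters: it guarantees $\lim_{t\to 0+}x_1(t)=\infty$, hence $x_1(t_1)/x_1(t_2)\to 0$, which is the precise replacement for the fact that $(t_1/t_2)^{|k|}\to 0$ used in Theorem \ref{ket-3}. Once this is noted, the rest of the argument transfers without change.
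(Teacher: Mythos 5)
Your proposal is correct and coincides with the paper's intended argument: the paper gives no separate proof of Theorem \ref{ket-4}, stating only that the assertions of Theorem \ref{ket-3} transfer to the more general model equation without change, which is precisely the kernel substitution $(t/s)^{|k|}\mapsto x_1(t)/x_1(s)$ you carry out, together with the observation that \eqref{ke-54} supplies the limit $x_1(t_1)/x_1(t_2)\to0$ as $t_1\to0+$ needed in the extremization. Your identification of exactly which structural properties of the power kernel are used is the right justification for the transfer.
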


\section{Equations with singularities at derivatives}\label{ss-6}

In the works by  A.E.~Zernov, R.P. Agarwal, D. O'Regan,  it is shown that conditions for the unique solvability can be obtained also when non-integrable singularities are contained in terms with functional operators. Consider an equation in which non-integrable singularities are permitted in all terms of the right-hand side:
\begin{equation}\label{n-1}
    \frac{1}{p(t)}\dot x(t)=-k\,x(t)+(Tx)(t)+f(t),\quad t\in[0,1],
\end{equation}
where the function $p:(0,1]\to\spaceR$, as well as in section \ref{p-4}, is positive and its restriction on every interval $[\varepsilon,1]$ is integrable ($p\in\spaceL[\varepsilon,1]$ for every $\varepsilon\in(0,1)$), and the function $p$ satisfies condition \eqref{ke-54}. Namely equations of the form \eqref{n-1} (with a neutral term, depending on the derivative $\dot x$, and with a non-linear term) were considered in  \cite{Zernov-2001-53-4}, \cite{Zernov-1}.

Here we find conditions on a linear bounded operator  $T:\spaceC\to\spaceL$ and a function $f\in\spaceL$ guaranteeing the solvability of the Cauchy problem for an equation that is equivalent to \eqref{n-1}.
For this,  because of additional singularities, we have to constrict  the range of the operator $T$ and the set of the right-hand sides $f$ in comparison to the previous sections.

Suppose that $\nu:[0,1]\to\spaceR$ is an increasing function satisfying the condition $\nu(0)=0$,
the function $p$ is defined previously, $\spaceL_\infty^{p\nu}$ is the space of all measurable functions $z:[0,1]\to\spaceR$ such that
\begin{equation*}
\vraisup_{s\in[0,1]}\frac{|z(s)|}{p(s)\,\nu(s)}<\infty,
\end{equation*}
with the norm
\begin{equation*}
    \norm{z}_{\spaceL_\infty^{p\nu}}=     \norm{|z|/(p\nu)}_{\spaceL_\infty}.
\end{equation*}
Consider the following ``model'' equation in the weight space $\spaceL_\infty^{p\nu}$
\begin{equation}\label{rr-1}
    \dot x(t)=-k\,p(t)\,x(t)+g(t),\quad t\in[0,1],
\end{equation}
where $g=\nu\,p\,f$, $f\in\spaceLi$.

A function $x\in\spaceAC[\varepsilon,1]$ for every $\varepsilon\in(0,1)$ is called a solution of equation \eqref{rr-1} if $x$ satisfies this equation for almost all $t\in[0,1]$. Every solution of \eqref{rr-1} has the representation
\begin{equation}\label{rr-2}
x(t)=x_1(t)\left(x(1)-\int_t^1\frac{p(s)\,\nu(s)\,f(s)}{x_1(s)}\,ds\right),\quad t\in[0,1],
\end{equation}
where $f\in\spaceLi$, function $x_1$ is defined by \eqref{ke-57}:
\begin{equation*}
    x_1(t)=e^{k\int_t^1p(s)\,ds},\quad t\in(0,1].
\end{equation*}
Denote the set of such solutions by $\spaceD$. \eqref{rr-2} sets a one-to-one correspondence between $\spaceD$ and $\spaceLip\times\spaceR$ or $\spaceLi\times\spaceR$. So, $\spaceD$ is a Banach space with the norm
\begin{equation}\label{rr-nd}
    \norm{x}_{\spaceD}=|x(1)|+\vraisup_{s\in[0,1]}\frac{|\dot x(s)+kp(s)x(s)|}{p(s)\nu(s)}.
\end{equation}

Let $k<0$. Then $\lim\limits_{t\to0+}x_1(t)=0$. Moreover, for the element $x\in\spaceD$ with representation \eqref{rr-2}, we have
\begin{equation*}
    |x(t)|\le x_1(t)|x(1)| +\nu(t)\norm{f}_\spaceLi\int_t^1\frac{x_1(t)}{x_1(s)}p(s)\,ds\le
    x_1(t)|x(1)| +\nu(t)\norm{f}_\spaceLi/k,
\end{equation*}
therefore $\lim\limits_{t\to0+}x(t)=0$ and $\vraisup\limits_{t\in[0,1]}\frac{|x(t)|}{\nu(t)}<+\infty$ if
\begin{equation}\label{rr-pnu}
\vraisup\limits_{t\in[0,1]}\frac{|x_1(t)|}{\nu(t)}<+\infty.
\end{equation}

Let $\spaceD_-\equiv\spaceD_-^{k,\nu,p}$ be the Banach space of all elements from $\spaceD$ extended by zero at $t=0$ with the norm \eqref{rr-nd}. Then each element from $\spaceD_-$ has representation \eqref{rr-2}, which defines an isomorphism between $\spaceD_-$ and $\spaceLi\times\spaceR$. The space  $\spaceD_-$ is continuously embedded into $\spaceC$: from \eqref{rr-2} it follows that
\begin{equation*}
\begin{split}
    \norm{x}_{\spaceC}\le |x(1)|+\frac{\norm{\nu}_{\spaceC}}{k}  \norm{f}_{\spaceLi}\le\max\{1,\frac{\norm{\nu}_{\spaceC}}{k}\}(|x(1)|+\norm{f}_{\spaceLi})=\\
    =\max\{1,\frac{\norm{\nu}_{\spaceC}}{k}\}\norm{x}_{\spaceD}.
\end{split}
\end{equation*}
Moreover,  $\spaceD_-$ is continuously embedded into the weight space
$\spaceL_\infty^{1/\nu}$ if  condition \eqref{rr-pnu} holds:
\begin{equation*}
    \norm{x}_{\spaceL_\infty^{1/\nu}}\le
     \vraisup\frac{|x_1(t)|}{\nu(t)}|x(1)|+
     \frac{1}{k}\norm{f}_{\spaceLi}\le  \max\{\vraisup\frac{|x_1(t)|}{\nu(t)},1/k\}\norm{x}_{\spaceD}.
\end{equation*}

Let $k>0$. The function
\begin{equation*}
  K(t)=  \int_t^1\frac{p(s)\,\nu(s)\,f(s)}{x_1(s)}\,ds,\quad t\in(0,1],
\end{equation*}
is bounded. Indeed,
\begin{equation*}
  |K(t)|\le \frac{v(t)\norm{f}_{\spaceLi}}{k} \int_t^1 e^{-k\int_s^1p(\tau)\,d\tau}\,d\left(-k\int_s^1p(\tau)\,d\tau\right)=\frac{v(t)\norm{f}_{\spaceLi}}{k},\  t\in(0,1].
\end{equation*}
It is easy to prove that there exists a finite limit
$\lim\limits_{t\to0+} K(t)$.

Let $x$ be a solution of \eqref{rr-1}. Since $\lim\limits_{t\to0+} x_1(t)=\infty$, then it follows from \eqref{rr-2} that $x\in\spaceD$ has a finite limit if and only if
\begin{equation}\label{rr-3}
    x(1)=\int_0^1\frac{p(s)\,\nu(s)\,f(s)}{x_1(s)}\,ds.
\end{equation}
In this case
\begin{equation}\label{rr-4}
  x(t)=\int_0^t\frac{x_1(t)}{x_1(s)}p(s)\nu(s)f(s)\,ds,\ t\in(0,1].
\end{equation}
Then $\lim\limits_{t\to0+} x(t)=0$, since
\begin{equation}\label{rr-4-1}
  |x(t)|\le \nu(t)\norm{f}_{\spaceLi}\int_0^t\frac{x_1(t)}{x_1(s)}p(s)\,ds=\nu(t)\norm{f}_{\spaceLi}/k,\quad t\in(0,1].
\end{equation}

Let $\spaceD_+\equiv\spaceD_+^{k,\nu,p}$ be the space of all solutions of \eqref{n-1} satisfying condition \eqref{rr-3} and extended by zero at $t=0$. Every element from $\spaceD_+$ has representation \eqref{rr-4}, which defines an isomorphism between $\spaceD_+$ and $\spaceLi$. Equipped with the norm of the space  $\spaceD$, it is a Banach space isomorphic to $\spaceL$.  The space $\spaceD_+$ is continuously embedded into the space $\spaceC$: from \eqref{rr-4-1} it follows that
\begin{equation*}
    \norm{x}_{\spaceC}\le \frac{\norm{\nu}_{\spaceC}}{k}  \norm{f}_{\spaceLi}\le\max\{1,\frac{\norm{\nu}_{\spaceC}}{k}\}(|x(1)|+\norm{f}_{\spaceLi})=\max\{1,\frac{\norm{\nu}_{\spaceC}}{k}\}\norm{x}_{\spaceD}.
\end{equation*}
Moreover, the space $\spaceD_+$ is continuously embedded into the weight space
$\spaceL_\infty^{1/\nu}$:
\begin{equation*}
    \norm{x}_{\spaceL_\infty^{1/\nu}}\le \frac{1}{k}\norm{f}_{\spaceLi}\le  \max\{1,1/k\}\norm{x}_{\spaceD}.
\end{equation*}

\subsection{Problem for $k>0$}

For  $k>0$ in the space $\spaceD_+$, we consider the Cauchy problem
\begin{equation}\label{n-2}
\left\{
\begin{array}{l}
    \dot x(t)=-k\,p(t)\,x(t)+p(t)\,\nu(t)\,(Tx)(t)+p(t)\,\nu(t)\,f(t),\quad t\in[0,1],\\
    x(0)=0,
\end{array}
\right.
\end{equation}
where $f\in\spaceLi$, $T:\spaceC\to\spaceLi$ is a linear bounded operator. This problem is equivalent to the following equation in the space $\spaceC$:
\begin{equation}\label{rr-5}
  x(t)=(ATx)(t)+(Af)(t),\quad t\in(0,1],
\end{equation}
where
\begin{equation*}
  (Az)(t)=\int_0^t\frac{x_1(t)}{x_1(s)}p(s)\nu(s)z(s)\,ds,\quad t\in(0,1].
\end{equation*}

If $T$ is a Volterra operator, then \eqref{rr-5} has a unique solution in the space  $\spaceC$, therefore problem\eqref{n-2} is uniquely solvable. Indeed, equation \eqref{rr-5} with a Volterra operator $AT:\spaceC\to\spaceC$ can be considered on every interval $[0,\theta]$, $\theta\in(0,1]$. Estimate the norm of this operator:
\begin{equation*}
  \norm{AT}_{\spaceC[0,\theta]\to\spaceC[0,\theta]}\le \nu(\theta)\frac{\norm{T}_{\spaceC\to\spaceLi}}{k}.
\end{equation*}

Since $\lim\limits_{t\to0}\nu(t)=0$, then for small enough $\theta>0$ the operator $AT$ is a contraction, therefore,  equation \eqref{rr-5} has a unique solution on  $[0,\theta]$. As far as the equation has no singularities on $[\theta,1]$, then from the Volterra property and regularity of the operator  $T$  it follows that the equation has a unique solution on the whole  interval $[0,1]$.

If the operator  $T:\spaceC\to\spaceLi$ is not a Volterra one, then, for example,
the condition
\begin{equation*}
    \norm{T}_{\spaceC\to\spaceLi}\le \frac{1}{\nu(1)\,k}
\end{equation*}
guarantees the contraction of the operator $AT$ and, therefore, the unique solvability of  \eqref{n-2}.

From  \cite{Zernov-1}, it follows that for the solvability of problem
\eqref{n-3} the term with the operator $T$, generally speaking, can not contain the improving
factor $\nu(t)$.

Suppose $T:\spaceC\to\spaceLi$ is a regular operator such that
\begin{equation}\label{rr-8}
    \vraisup_{t\in[0,1]}\frac{(|T|\nu)(t)}{\nu(t)}\equiv\calT<+\infty,
\end{equation}
where
$$T=T^+-T^-,\quad |T|=T^++T^-,$$
$T^+$, $T^-:\spaceC\to\spaceLi$ are linear positive operators.
Any Volterra operator satisfies condition \eqref{rr-8}. In this case, we have
\begin{equation*}
    \vraisup_{t\in[0,1]}\frac{(|T|\nu)(t)}{\nu(t)}\le
    \vraisup_{t\in[0,1]}\frac{\nu(t)(|T|\unit)(t)}{\nu(t)}=\norm{|T|}_{\spaceC\to\spaceLi}<+\infty.
\end{equation*}
Consider with such an operator $T$ and for $k>0$ the Cauchy problem
\begin{equation}\label{n-20}
\left\{
\begin{array}{l}
    \dot x(t)=-k\,p(t)\,x(t)+p(t)\,(Tx)(t)+p(t)\,\nu(t)\,f(t),\quad t\in[0,1],\\
    x(0)=0,
\end{array}
\right.
\end{equation}
on the set
$$\Omega_C=\{x\in\spaceD_+:\ \vraisup_{s\in[0,1]}\frac{|x(t)|}{\nu(t)}\le C\},$$
where $C$ is a positive constant.

Let $$\spaceC_{C\nu}=\{x\in\spaceC:\ \vraisup_{s\in[0,1]}\frac{|x(t)|}{\nu(t)}\le C\}.$$

It is clear that  $\spaceC_{C\nu}$ is a Banach space with the norm
\begin{equation*}
    \norm{x}_{\Omega_C}=\vraisup_{t\in[0,1]}\frac{|x(t)|}{\nu(t)}.
\end{equation*}

If $x\in\Omega_C$, we have
\begin{equation*}
    \vraisup_{t\in[0,1]}     \frac{|(Tx)(t)|}{\nu(t)}<+\infty,
\end{equation*}
so, problem \eqref{n-20} is equivalent to the equation in $\spaceC_{C\nu}$:
\begin{equation}\label{rr-9}
  x(t)=(ATx)(t)+(A(\nu f))(t),\quad t\in(0,1],
\end{equation}
where
\begin{equation*}
  (Az)(t)=\int_0^t\frac{x_1(t)}{x_1(s)}p(s)z(s)\,ds,\quad t\in(0,1].
\end{equation*}
Since
\begin{equation*}
  |A(\nu f)(t)|\le v(t)\frac{\norm{f}_{\spaceLi}}{k},\quad t\in(0,1],
\end{equation*}
and for $x\in\Omega_C$ (and for $x\in\spaceC_{C\nu}$)
\begin{equation*}
  |(ATx)(t)|\le v(t)C\frac{\calT}{k},\quad t\in[0,1],
\end{equation*}
then for
\begin{equation}\label{rr-12}
    \calT<k
\end{equation}
the operator of the right-hand side of \eqref{rr-9} maps the set $x\in\Omega_C$ into itself for large enough $C$ and it is a contraction.

Any solution of  \eqref{n-20} in $\spaceD_+$ is bounded, therefore, it belongs to the set $\Omega_C$ for some large enough $C\in\spaceR$.

We conclude that if conditions  \eqref{rr-8}, \eqref{rr-12} hold (that is
\begin{equation}\label{rr-13}
    \calT\equiv \vraisup_{t\in[0,1]}\frac{(|T|\nu)(t)}{\nu(t)}<k),
\end{equation}
then problem \eqref{n-20} in the space $\spaceD_+$ has a unique solution.

Condition \eqref{rr-13} is essential: we cannot change this inequality with the non-strict inequality. Indeed, if $Tx=kx$, then problem \eqref{n-20}, generally speaking, has no solutions for  $p\nu\not\in\spaceL$.

In comparison to \cite{Zernov-1} for the linear case, we have changed the operator from \cite{Zernov-1}
$$(Tx)(t)=ax(bt),\quad t\in[0,1],\text{ for $a,b\in\spaceR$, $b\in(0,1]$,}$$
by a general functional regular operator $T$. Moreover, as a factor $\nu$, we can take
not only the function $\nu(t)=t$ \cite{Zernov-1} but arbitrary continuous increasing function such that $\nu(0)=0$. This extends the degree of considered singularities. Thus, we can conclude that here some technic improving generalizations of the results from \cite{Zernov-1} are obtained in the linear case.

\subsection{Problem for $k<0$}
Now for $k<0$ we consider the Cauchy problem in the space $\spaceD_-$
with an additional boundary condition at the right end of the interval:
\begin{equation}\label{n-3}
\left\{
\begin{array}{l}
    \dot x(t)=-k\,p(t)\,x(t)+p(t)\nu(t)(Tx)(t)+p(t)\nu(t)f(t),\quad t\in[0,1],\\
    x(0)=0,\quad x(1)=c,
\end{array}
\right.
\end{equation}
where $c\in\spaceR$, $f\in\spaceL_\infty$, $T:\spaceC\to\spaceL_\infty$ is a linear bounded operator.

Problem \eqref{n-3} is equivalent to the equation in the space  $\spaceC$:
\begin{equation*}
  x(t)=(ATx)(t)+(Af)(t)+c\,x_1(t),\quad t\in(0,1],
\end{equation*}
where
\begin{equation*}
  (Az)(t)=-\int_t^1\frac{x_1(t)}{x_1(s)}p(s)\nu(s)z(s)\,ds,\quad t\in(0,1].
\end{equation*}
Estimate the norm of the operator $AT:\spaceC\to\spaceC$:
\begin{equation*}
  \norm{AT}_{\spaceC\to\spaceC}< \nu(1)\frac{\norm{T}_{\spaceC\to\spaceLi}}{|k|}.
\end{equation*}
Therefore, the condition
\begin{equation*}
    \norm{T}_{\spaceC\to\spaceLi}\le \frac{|k|}{\nu(1)}
\end{equation*}
guarantees the contraction of $AT$ and the unique solvability of  \eqref{n-3}.

For a Volterra operator $T:\spaceC\to\spaceLi$, consider the problem
\begin{equation}\label{n-3a}
\left\{
\begin{array}{l}
    \dot x(t)=-k\,p(t)\,x(t)+p(t)\nu(t)(Tx)(t)+p(t)\nu(t)f(t),\quad t\in[0,\alpha],\\
    x(0)=0,\quad x(\alpha)=c,
\end{array}
\right.
\end{equation}
in the space  $\spaceD_-[0,\alpha]$ of all restrictions of all functions from $\spaceD_-$ on $[0,\alpha]$ for  $\alpha\in(0,1]$.

Problem \eqref{n-3a} is equivalent to the equation in the space $\spaceC[0,\alpha]$:
\begin{equation*}
  x(t)=(A_\alpha Tx)(t)+(A_\alpha f)(t)+c\,x_1(t),\quad t\in(0,\alpha],
\end{equation*}
where
\begin{equation*}
  (A_\alpha z)(t)=-\int_t^\alpha\frac{x_\alpha(t)}{x_\alpha(s)}p(s)\nu(s)z(s)\,ds,\quad t\in(0,1],
\end{equation*}
$x_\alpha(t)=e^{-|k|\int_t^\alpha p(s)\,ds}$.
Estimate the norm of the operator $A_\alpha T:\spaceC[0,\alpha]\to\spaceC[0,\alpha]$:
\begin{equation*}
  \norm{A_\alpha T}_{\spaceC[0,\alpha]\to\spaceC[0,\alpha]}< \nu(\alpha)\frac{\norm{T}_{\spaceC\to\spaceLi}}{|k|}.
\end{equation*}
Therefore, for small enough $\alpha>0$  (such that
   $\nu(\alpha) \norm{T}_{\spaceC\to\spaceLi}\le |k|$)
the operator  $A_\alpha T$ is a contraction, and problem \eqref{n-3a} is uniquely solvable for such $\alpha$.

It turns out, that for the solvability of problem
\eqref{n-2}, generally speaking, the term with the operator $T$ can not contain improving factor $\nu(t)$.

Suppose $T:\spaceC\to\spaceLi$ is a regular operator such that
\begin{equation}\label{rr-80}
    \vraisup_{t\in[0,1]}\frac{(|T|\nu)(t)}{\nu(t)}\equiv\calT<+\infty,
\end{equation}
where
$$T=T^+-T^-,\quad |T|=T^++T^-,$$
$T^+$, $T^-:\spaceC\to\spaceLi$ are linear positive operators.
Any Volterra operator satisfies  \eqref{rr-80}. In this case we have
\begin{equation*}
    \vraisup_{t\in[0,1]}\frac{(|T|\nu)(t)}{\nu(t)}\le
    \vraisup_{t\in[0,1]}\frac{\nu(t)(|T|\unit)(t)}{\nu(t)}=\norm{|T|}_{\spaceC\to\spaceLi}<+\infty.
\end{equation*}
Let condition  \eqref{rr-pnu} be fulfilled.

Consider for the Cauchy problem
\begin{equation}\label{n-2000}
\left\{
\begin{array}{l}
    \dot x(t)=-k\,p(t)\,x(t)+p(t)\,(Tx)(t)+p(t)\,\nu(t)\,f(t),\quad t\in[0,\alpha],\\
    x(0)=0,\quad x(\alpha)=c,
\end{array}
\right.
\end{equation}
on the set
$$\Omega_C[0,\alpha]=\{x\in\spaceD_-[0,\alpha]:\ \vraisup_{s\in[0,\alpha]}\frac{|x(t)|}{\nu(t)}\le C\},$$
where $C$ is a positive constant.

Let  $$\spaceC_{C\nu}[0,\alpha]=\{x\in\spaceC[0,\alpha]:\ \vraisup_{s\in[0,\alpha]}\frac{|x(t)|}{\nu(t)}\le C\}.$$
It is clear, that  $\spaceC_{C\nu}[0,\alpha]$ is a Banach space with the norm
\begin{equation*}
    \norm{x}_{\spaceC_{C\nu}[0,\alpha]}=\vraisup_{t\in[0,\alpha]}\frac{|x(t)|}{\nu(t)}.
\end{equation*}

For $x\in\Omega_C[0,\alpha]$ we have
\begin{equation*}
    \vraisup_{t\in[0,\alpha]}     \frac{|(Tx)(t)|}{\nu(t)}\le \calT \norm{x}_{\spaceC_{C\nu}[0,\alpha]}<+\infty,
\end{equation*}
therefore, a solution of  \eqref{n-2000} for a large enough constant $C$ satisfies the equation in the space  $\spaceC_{C\nu}[0,\alpha]$:
\begin{equation}\label{rr-90}
  x(t)=(A_\alpha Tx)(t)+(A_\alpha(\nu f))(t)+c\,x_1(t),\quad t\in(0,1],
\end{equation}
where
\begin{equation*}
  (A_\alpha z)(t)=-\int_t^\alpha\frac{x_\alpha(t)}{x_\alpha(s)}p(s)z(s)\,ds,\quad t\in(0,\alpha].
\end{equation*}

Consider the function
\begin{equation*}
  K_\alpha(t)=  \int_t^\alpha\frac{x_\alpha(s)}{x_\alpha(s)}p(s)\,\nu(s)\,ds,\quad t\in(0,\alpha],\quad \alpha\in(0,1],
\end{equation*}
where $x_\alpha(t)=e^{-|k|\int_t^\alpha p(s)\,ds}$, $t\in[0,\alpha]$. Obviously, that $K_\alpha(\alpha)=0$, moreover,
\begin{equation*}
    \lim_{t\to0+}K_\alpha(t)=\lim_{t\to0+}\frac{-p(t)\nu(t)}{x_\alpha(t)x_\alpha(t)^{-1}(-p(t))}=\lim_{t\to0+}\nu(t)=0.
\end{equation*}
Find the maximum of the function $K_\alpha(t)/\nu(t)$. At the point $t_*$ of the maximum, the equality
\begin{equation*}
  \dot K_\alpha(t_*)\,\nu(t_*)=  K_\alpha(t_*)\,\dot\nu(t_*)
\end{equation*}
is fulfilled. Therefore,
\begin{equation*}
    \frac{1}{|k|-(\dot \nu(t_*)/(p(t_*)\,\nu(t_*))}=K_\alpha(t_*).
\end{equation*}

Suppose that the function $\nu:(0,1]\to\spaceR$ is continuously differentiable and
\begin{equation}\label{rr-lim}
    \lim_{t\to0+}\frac{\dot \nu(t)}{p(t)\nu(t)}=0.
\end{equation}
For example, if $p(t)=1/t$, then $\nu=-1/ln(t/2)$. Note, that in this case $\nu$ satisfies the singular equation $\dot x(t)=x^2(t)/t$, $t\in(0,1]$. If $p(t)=1/t^\mu$, $\mu>1$, then we may set $\nu=t^r$, $r>0$.

Now for any  $\varepsilon>0$ there exists $\alpha>0$ such that
\begin{equation*}
    \max_{t\in[0,\alpha]}K_\alpha(t)\le \frac{1+\varepsilon}{|k|}.
\end{equation*}
Since
\begin{equation*}
  |A_\alpha(\nu f))(t)|+|c\,x_\alpha(t)|\le \nu(t)\left(\frac{\norm{f}_{\spaceLi}}{|k|}+|c|\,\vraisup\limits_{t\in[0,\alpha]}\frac{|x_\alpha(t)|}{\nu(t)}\right),
  \quad t\in(0,1],
\end{equation*}
and for any  $\varepsilon>0$ for small enough $\alpha>0$ the inequality
\begin{equation*}
  |(A_\alpha Tx)(t)|\le \nu(t) \norm{x}_{\spaceC_{C\nu}} \frac{\calT(1+\varepsilon)}{|k|},\quad t\in[0,1],
\end{equation*}
holds for all $x\in\Omega_C$, then for
\begin{equation}\label{rr-120}
    \calT<k
\end{equation}
the operator of the right-hand side of equation \eqref{rr-90} is a contraction for all small enough $\alpha>0$ and  maps the set $x\in\Omega_C$ into itself for all large enough $C$.

Every solution of  \eqref{n-2000} in he space $\spaceD_-[0,\alpha]$ is bounded, therefore, it belongs to the set $\Omega_C[0,\alpha]$ for some large enough $C\in\spaceR$.
We conclude that if conditions \eqref{rr-lim}, \eqref{rr-pnu}, \eqref{rr-80}, and \eqref{rr-120} are fulfilled (that is
\begin{equation}\label{rr-130}
    \calT\equiv \vraisup_{t\in[0,1]}\frac{(|T|\nu)(t)}{\nu(t)}<k),
\end{equation}
then problem  \eqref{n-2000} in the space $\spaceD_-[0,\alpha]$ has a unique solution for small enough  $\alpha>0$. Note, that condition \eqref{rr-130} can be substituted by
\begin{equation*}
    \calT\equiv \limsup_{\beta\to0+}\vraisup_{t\in[0,\beta]}\frac{(|T|\nu)(t)}{\nu(t)}<k.
\end{equation*}

Condition \eqref{rr-130} is essential: we cannot change this inequality with the non-strict inequality. Indeed, if $Tx=kx$, then problem \eqref{n-2000}, generally speaking, has no solutions.


\begin{thebibliography}{99}
\bibitem{Zernov-1}
R.P. Agarwal, D. O'Regan, A.E. Zernov;  A singular initial value problem for some functional differential equations, \emph{Journal of Applied Mathematics and Stochastic Analysis}, no. 3 (2004) 261--270.

\bibitem{Agarwal} R.P. Agarwal, A. Ronto;  Linear functional differential equations possessing
solutions with a given growth rate, \emph{J. Inequal. Appl.}, no. 1 (2005) 49--65.

\bibitem{R3} N.V. Azbelev, V.P. Maksimov, L.F. Rakhmatullina;  \emph{Introduction to the Theory
of Functional Differential Equations}, Nauka, Moscow, 1991. (in Russian)

\bibitem{R2} N.V. Azbelev, L.F. Rakhmatullina;
Theory of linear abstract functional differential equations and applications.
\emph{Mem. Differ. Equ. Math. Phys. 8}, (1996) 1--102.

\bibitem{Bravyi} E.I. Bravyi;
On the regularization of singular functional-differential equations
\emph{Differ. Equations, 30}, no. 1 (1994) 22--28.

\bibitem{Bravyi1}
E. Bravyi; A note on the fredholm property of boundary value problems for linear functional differential equations, \emph{Mem. Differ. Equ. Math. Phys., 20} (2000) 133--135.

\bibitem{Hakl}
E. Bravyi, R. Hakl, A. Lomtatidze;   Optimal conditions on unique
solvability of the Cauchy problem for the first order linear functional
differential equations, \emph{Czechoslovak Math. J., 52~(127)},
no. 3 (2002) 513--530.

\bibitem{BHL2}
E. Bravyi, R. Hakl, A. Lomtatidze;  On Cauchy problem for first order
nonlinear functional differential equations of non-Volterra's type,
\emph{Czechoslovak Math. J.,  52~(127)}, no. 4 (2002) 673--690.

\bibitem{bravyi2}
E.I. Bravyi; \emph{Solvability of boundary value problems for linear functional differential equations}, Rerular and Chaotic dynamic, Moscow-Izhevsk, 2011. (in Russain)

\bibitem{bravyi3}
E.I. Bravyi;
Solvability of the Cauchy problem for higher-order linear functional differential equations,
\emph{Differ. Equations, 48}, no. 4 (2012) 465--476.

\bibitem{DSh} N. Danford, L. Schwartz; \emph{Linear operators. Part I:General theory},
New York, 1957.

\bibitem{HLP2002-2}
R. Hakl, A. Lomtatidze, B. P\r{u}\v{z}a;  On periodical solutions of first
order linear functional differential equations, \emph{Nonlinear Anal. Theory, Methods, Appl., 49}, (2002) 929--945.

\bibitem{HLS2002-5}
R. Hakl, A. Lomtatidze, J. \v{S}remr;
\emph{Some Boundary Value Problems For First
Order Scalar Functional Differential Eequations}, Masaryk University, Brno, 2002.

\bibitem{Hakl-Mukh-2005}
R. Hakl, S. Mukhigulashvili; On one estimate for periodic functions,
\emph{Georgian Math. J., 12}, no. 1 (2005) 97--114.

\bibitem{Hakl2009}
R. Hakl, S. Mukhigulashvili;  A periodic boundary value problem for
functional differential equations of higher order, \emph{Georgian Math. J.,
 16}, no. 4 (2009) 651--665.

\bibitem{Hardy}
G.H. Hardy, J.E. Littlewood, G. Polya; \emph{Inequalities}, IL, Moscow, 1948. (in Russian)

\bibitem{Kantorovich-Akiliv}
L.V. Kantorovich, G.P. Akilov;
\emph{Functional analysis}, Nauka, Moscow, 1977. (in Russian)

\bibitem{KVP} L.V. Kantorovich, V.Z. Vulikh, A.G. Pinsker;
\emph{Functional analysis in semiordered spaces}, GITTL, Moscow-Leningrad, 1950. (in Russain)

\bibitem{KigPuzha}
I. Kiguradze, B. P\r{u}\v{z}a;  \emph{Boundary Value Problems For Systems of
Linear Functional Differential Equations}, Masaryk University, Brno, 2003.

\bibitem{Sokhadze-1} I. Kiguradze, Z. Sokhadze;
Existence and continuability of solutions of an initial value problem for a system of singular functional differential equations,
\emph{Mem. Differ. Equ. Math. Phys., 5}, (1995) 127--130.

\bibitem{Sokhadze-3} I. Kiguradze, Z. Sokhadze;
On singular functional differential inequalities,
\emph{Georgian Math. J., 4}, no.3  (1997) 259--278.

\bibitem{KigSoh-1997} I.T. Kiguradze, Z.P. Sokhadze;
The Cauchy problem for evolution singular function-differential equations, \emph{Differ. Equations, 33}, no.1 (1997) 47--58.

\bibitem{KigSoh-1997-2} I. Kiguradze, Z. Sokhadze; On the global solvability of the Cauchy
problem for singular functional differential equations, \emph{Georgian Math. J., 5}, no. 4
(1997) 355--372.

\bibitem{Kung} A.V. Kungurtseva;
On a class of boundary value problems for singular equations
\emph{Russ. Math., 39}, no.9 (1995) 28--34.

\bibitem{Lab} S.M. Labovskij;
Positive solutions of a two-point boundary-value problem for a singular linear functional equation, \emph{Differ. Equations, 24}, no.10 (1988) 1116--1123.

\bibitem{LabShind}
S. Labovskii, A. Shindiapin;
On existence of nontrivial solution of a singular functional differential equation,
\emph{Funct. Differ. Equ., 5}, no. 1--2 (1998) 183--194.

\bibitem{5}
A.G. Lomtatidze, R. Hakl, B. P\r{u}\v{z}a;
On the periodic boundary value problem for first-order functional-differential equations,
\emph{Differ. Equations, 39}, no. 3 (2003) 344--352.

\bibitem{Mu1997-2} A. Lomtatidze, S. Mukhigulashvili;
On a two-point boundary value problem for second-order
functional-differential equations. I, \emph{Mem. Differ. Equ. Math. Phys., 10}, (1997) 125--128.

\bibitem{Mu1997-1} A. Lomtatidze, S. Mukhigulashvili;   On a two-point boundary value
problem for second order functional differential equations. II, \emph{Mem. Differ. Equ. Math. Phys., 10}, (1997) 150--152.

\bibitem{Mu2000}
A. Lomtatidze, S. Mukhigulashvili;  \emph{Some two-point boundary value problems
for second-order functional-differential equations}, Masaryk University, Brno, 2000.

\bibitem{Mu2006-1}
S. Mukhigulashvili;
On a periodic boundary value problem for cyclic feedback type linear functional differential systems,  \emph{Arch. Math., 87}, no.~3 (2006) 255--260.

\bibitem{Mukh2006}
S.V. Mukhigulashvili;
On the solvability of the periodic problem for nonlinear second-order function-differential
equations, \emph{Differ. Equations, 42}, no.~3 (2006) 380--390.

\bibitem{Mukh2006-1}
S. Mukhigulashvili;  On a periodic solutions of second order functional differential
equations, \emph{Italian J. of Pure and Appl. Math.}, no. 20 (2006) 29--50.

\bibitem{Mu2007}
S. Mukhigulashvili;
On a periodic boundary value problem for third order linear functional differential
equations, \emph{Nonlinear Anal. Theory, Methods, Appl., 66}, no. 2(A) (2007) 527--535.

\bibitem{MuSremr2005}
S.V. Mukhigulashvili, J. \v{S}remr;
On the solvability of the Dirichlet problem for nonlinear second-order functional-differential equations, \emph{Differ. Equations, 41}, no. 10 (2005) 1425--1435.

\bibitem{MuSremr2006}
S. Mukhigulashvili, J. \v{S}remr;
On a two-point boundary value problem for the second order linear
functional differential equations with monotone operators,
\emph{Funct. Diff. Equ., 13}, no.3--4 (2006) 519--537.

\bibitem{Plaksina2010} I.M. Plaksina; On a model singular problem,
\emph{Perm University Reports. Mathematics. Mechanics. Computers},  no. 1 (2010) 19--23. (in Russian)

\bibitem{Plaksina2011} I.M. Plaksina; To a question on the Fredholm property and the
solvability of a singular boundary value problem for a functional differential equation, \emph{Tambov University Reports. Series: Natural and Technical Sciences, 16}, no. 4 (2011) 1150--1152. (in Russian)

\bibitem{Plaksina2012}
I.M. Plaksina; One class of singular linear functional differential equations,
\emph{Russ. Math., 56}, no. 2 (2012) 80--83.

\bibitem{Ronto-10} V. Pylypenko, A. Ronto;
On singular solutions of linear functional differential equations with negative coefficients,
\emph{J. Inequal. Appl.}, article ID 384910 (2008) 1--16.

\bibitem{Ronto-20} V. Pylypenko, A. Ronto;
On a singular Cauchy problem for functional differential equations with non-increasing non-linearities, \emph{Funkc. Ekvacioj, 53}, no. 2 (2010) 277--289.

\bibitem{Ronto-30} V. Pylypenko, A. Ronto;  On slowly growing solutions of singular linear
functional differential systems,  \emph{Mathematische Nachrichten, 285}, no. 5--6 (2011) 727--743.

\bibitem{R1} L.F. Rakhmatullina; \emph{Linear Functional Differential Equations}, Thesis for
the degree of Doctor of Physical and Mathematical Sciences, Kiev, 1982.

\bibitem{Sokhadze-4} Z. Sokhadze;
On the solvability of the weighted initial value problem for high-order evolution singular functional-differential equations,
\emph{Mem. Differ. Equ. Math. Phys., 15},  (1998) 145--149.

\bibitem{Sokhadze-5} Z. Sokhadze;
On the uniqueness of solutions to the weighted initial value problem for higher-order evolution singular functional-differential equations,
\emph{Mem. Differ. Equ. Math. Phys., 20}, (2000) 141--144.

\bibitem{Sokhadze-6} Z. Sokhadze;
On the structure of the set of solutions of the weighted Cauchy problem for high order evolution singular functional differential equations,
\emph{Mem. Differ. Equ. Math. Phys., 25},  (2002) 153--155.

\bibitem{Shind} A.I. Shindyapin; A boundary value problem for a singular equation,
\emph{Differ. Equations, 20}, no. 3, (1984) 346--350.

\bibitem{Sremr2006}
J. \v{S}remr;  Solvability conditions of the Cauchy problem for
two-dimensional systems of linear functional differential equations with
monotone operators,  \emph{Math. Bohem., 132}, no.~3 (2007) 263--295.

\bibitem{Sh2010} J. \v{S}remr On the initial value problem
for two-dimensional linear  functional differential systems, \emph{Mem. Differ. Equ. Math. Phys., 50}, (2010) 1--127.

\bibitem{Zernov-2001-53-4}
A.E. Zernov;
On the solvability and asymptotics of solutions of one functional differential equation with singularity, \emph{Ukrainian Mathematical Journal, 53}, no. 4 (2001) 514--527.

\end{thebibliography}
\end{document}